\synctex=1
\documentclass[letterpaper]{article}

\usepackage{amsmath}
\usepackage{amsfonts}
\usepackage{amsthm}
\usepackage{amssymb}
\usepackage{datetime}
\usepackage{color}

\theoremstyle{plain}
\newtheorem{theorem}{Theorem}[section]
\newtheorem{lemma}[theorem]{Lemma}
\newtheorem{corollary}[theorem]{Corollary}
\theoremstyle{definition}
\newtheorem{definition}[theorem]{Definition}

\newtheorem{example}[theorem]{Example}
\newtheorem{fact}[theorem]{Fact}
\newtheorem{remark}[theorem]{Remark}

\numberwithin{equation}{section}

\newcommand{\A}{{\mathbb A}}
\newcommand{\all}{\hbox{for all}}

\newcommand{\bra}[2]{\langle#1,#2\rangle}

\newcommand{\Bra}[2]{\big\langle#1,#2\big\rangle}
\newcommand{\cmqd}{{CMQd}}
\newcommand{\CML}{{\cal CML}}

\newcommand{\D}{{\cal D}}

\newcommand{\dbs}{^{**}}

\newcommand{\dom}{\hbox{\rm dom}}
\newcommand{\eighth}{\ts\frac{1}{8}}

\newcommand{\eps}{\varepsilon}

\newcommand{\fourth}{\ts\frac{1}{4}}

\newcommand{\half}{{\textstyle\frac{1}{2}}}
\newcommand{\hexth}{{\textstyle\frac{1}{16}}}

\newcommand{\I}{\mathbb I}

\newcommand{\infn}{\inf\nolimits}

\newcommand{\J}{{\mathbb J}}

\newcommand{\limn}{\lim\nolimits}

\newcommand{\limsupn}{\limsup\nolimits}

\newcommand{\lr}{\Longrightarrow}
\newcommand{\Lt}{{\wt L}}

\newcommand{\MML}{{\cal MML}}

\newcommand{\minn}{\min\nolimits}
\newcommand{\NN}{\mathbb N}
\newcommand{\nth}{{\textstyle\frac{1}{n}}}
\newcommand{\on}{\hbox{on}}

\newcommand{\PC}{{\cal PC}}
\newcommand{\PCLSC}{{\cal PCLSC}}

\newcommand{\qlr}{\quad\Longrightarrow\quad}
\newcommand{\qt}{\wt q}

\newcommand{\quand}{\quad\hbox{and}\quad}
\newcommand{\rbar}{\,]{-}\infty,\infty]}

\newcommand{\RR}{\mathbb R}
\newcommand{\rl}{\Longleftarrow}
\newcommand{\rt}{\wt r}

\newcommand{\squaree}{\ \square_E\ }
\newcommand{\ssquare}{\ \square\ }
\newcommand{\st}{\hbox{such that}}
\newcommand{\tsum}{\ts\sum}
\newcommand{\sumn}{\sum\nolimits}
\newcommand{\tsumn}{\ts\sumn}
\newcommand{\supn}{\sup\nolimits}

\newcommand{\TNW}{\cal{T_{NW}}}
\newcommand{\toto}{\rightrightarrows}

\newcommand{\ts}{\textstyle}

\newcommand{\wh}{\widehat}
\newcommand{\wt}{\widetilde}
\newcommand{\W}{{\cal W}}

\newcommand{\xbra}[2]{\lfloor#1,#2\rfloor}
\newcommand{\Xbra}[2]{\big\lfloor#1,#2\big\rfloor}

\newcommand{\Cor}{Corollary~\ref}
\newcommand{\Def}{Definition~\ref}
\newcommand{\Defs}{Definitions~\ref}
\newcommand{\Ex}{Example~\ref}
\newcommand{\Lem}{Lemma~\ref}
\newcommand{\Lems}{Lemmas~\ref}

\newcommand{\Rem}{Remark~\ref}
\newcommand{\Sec}{Section~\ref}

\newcommand{\Thm}{Theorem~\ref}
\newcommand{\Thms}{Theorems~\ref}

\title{Faces of quasidensity}
\author{
Stephen Simons
\thanks{
Department of Mathematics, University of California, Santa Barbara, CA\ 93106-3080, U.S.A.
Email: \texttt{stesim38@gmail.com}.}}

\date{}

\begin{document}

\maketitle

\begin{abstract}\noindent
This paper is about the maximally monotone and quasidense subsets of the product of a real Banach space and its dual.  We discuss six subclasses of the maximal monotone sets that are equivalent to the quasidense ones.   We define the Gossez extension  to the dual of a maximally monotone set, and give nine equivalent characterizations of an element of this set in the quasidense case.   We discuss maximally monotone sets of ``type (NI)'' (one of the six classes referred to above) and we show that the ``tail operator'' is {\em not} of type (NI), but it is the Gossez extension of a maximally monotone set that {\em is} of type (NI). We  generalize Rockafellar's surjectivity theorem for maximally monotone subsets of reflexive Banach spaces to maximally monotone subsets of type (NI) of general Banach spaces.     We discuss a generalization of the Brezis-Browder theorem on monotone linear subspaces of reflexive spaces to the nonreflexive situation.    
 We also discuss briefly maximally monotone subsets of ``type (D)'' and ``type (WD)'' (two more of the six classes referred to above).
\end{abstract}

{\small \noindent {\bfseries 2020 Mathematics Subject Classification:}
{Primary 47H05; Secondary 47N10, 46A20.}}

\noindent {\bfseries Keywords:} Banach space, quasidense set, monotone set, maximally monotone set, type (NI), Gossez extension.

%\pagestyle{myheadings}\markright{\rm\jobname, run on \today\ at \currenttime}

%SN maps and Banach SN spaces
\section{Introduction}
\setlength{\belowdisplayskip}{4pt}
\setlength{\belowdisplayshortskip}{4pt}
\setlength{\abovedisplayskip}{4pt}
\setlength{\abovedisplayshortskip}{4pt}

Let $E$ be a nontrivial real Banach space with dual $E^*$, and $B := E \times E^*$.   After some notational preliminaries, the {\em quasidensity} of a subset of $B$ is defined in \Def{QDdef}.   This definition appears in condensed form in \eqref{QD1}, and in expanded form in \eqref{QD2}.   A glance at the complexity of these two forms should explain why we use the condensed form for analysis whenever possible.

\Sec{SNsec} contains some fairly standard notation. \Sec{RLsec} contains the definitions and basic properties of the three quantities $L$, $q$ and $r$ that are needed for our analysis using the ``condensed notation''.

We prove in  \Thm{RLMAXthm} the fundamental result that a {\em closed, monotone quasidense set is maximally monotone}.   This has a refreshingly simple proof.   In \Def{QDdef}, we also introduce the acronym {\em``\cmqd''}, which will be used all through this paper.   In \Rem{TAILrem}, we introduce the {\em tail operator}, which will be discussed further in \Ex{TAILNIex} and \Ex{TAILGex}.

We suppose for the rest of  this paper that $M$ is a maximally monotone subset of $B$.   Our analysis depends on three functions: $P_M\colon\ B \to [0,\infty]$ (see \eqref{P}), $F_M\colon B^* \to\ ]-\infty,\infty]$ which is defined in terms of $P_M$ \big(see \eqref{DEL}\big) and $G_M\colon B^* \to\ ]-\infty,\infty]$ \big(see \eqref{G}\big).   Surprisingly, there is an explicit formula for $P_M$ in terms of $F_M$.   See \Thm{PMFMqt}.     

\Thm{SUFFthm}, the central result of this paper, contains a sufficient condition in terms of the function $P_M$ for $M$, to be \cmqd.   Actually, the implication (a)$\lr$(d) of \Thm{EQUIVthm} shows that this condition is also necessary.

\Sec{RLTsec} contains the definitions and basic properties of the three quantities $\Lt$, $\qt$ and $\rt$ (defined on $B^*$ rather than on $B$) analogous to those introduced in \Sec{RLsec}.  

The main results in \Sec{Fsec} are \Thm{Signsthm} and \Cor{Signscor}, results about $P_M$ and $F_M$, in which the concept of {\em exact} inf--convolution is important.   \Cor{Signscor}(b) will be used in the latter part of \Sec{NIsec} in our generalization of Rockafellar's surjectivity theorem to nonreflexive spaces. In \Cor{Signscor}(c), it is shown that if $F_M \ge 0$ on $B$ then $M$ is \cmqd.   Actually, this converse of this is true --- see the implication (a)$\lr$(c) of \Thm{EQUIVthm}.

In \Sec{Gsec}, we explore the elementary properties of the function $G_M$, and define the {\em Gossez extension}, $M^\sharp$, of $M$, (see \Def{MGdef}).  \big(We write $M^\sharp$ since the usual notation, $\overline M$, can have other interpretations.\big)    

This leads rapidly to \Sec{NIsec}, in which we define maximally monotone sets of {\em type (NI)}, originally introduced in \cite[1996]{RANGE}. The definition in condensed form is  ``$G_M \ge 0\ \on\ B^*$'' \big(see \eqref{NIdef2}: it also appears in expanded form in \eqref{NIdef1}\big).    It is proved in \Thm{AFMAXthm} that if $M$ is of type (NI) then $M^\sharp$ is a maximally monotone subset of $B^*$.  We show in \Ex{TAILNIex} that the tail operator, $T$, is maximally monotone but not of type (NI) and we exhibit in \Ex{TAILGex} a maximally monotone set of type (NI) whose Gossez extension is the graph of $T$. This leads to a negative answer to the question posed in \cite[Problem 12.7, p.\ 1047]{PARTONE}.
   \Thm{AFRLthm} is an existence theorem for maximally monotone subsets of $B$ of type (NI), which relies on the {\em exactness} of the inf--convolution already mentioned in the discussion of \Thm{Signsthm}.  Its consequence, \Cor{NISURcor}, is a generalization of Rockafellar's surjectivity theorem from reflexive Banach spaces to general Banach spaces (see \Cor{ROCKSURcor}).   Of course, Rockafellar's result is, in turn, a generalization of Minty's theorem from Hilbert spaces to reflexive Banach spaces.              

In \Sec{LINsec}, we make a short digression and discuss monotone linear\break subspaces of $B$ and their adjoint spaces.  \Lem{FNLlem} is about the {\em automatic maximality} of certain monotone linear subspaces.  \Thm{qqthm} contains a generalization of Brezis-Browder \cite[Theorem 2]{BB} to nonreflexive spaces that was first proved in \cite[Theorem 4.1, p.\ 4960]{BBWYBB}.   The proof given here is considerably shorter.

We collect in \Thm{EQUIVthm} the four criteria that we have already discussed for $M$ to be a \cmqd\ subset of $B$.   There are many other equivalent criteria  \big(see\break \cite[Introduction, pp.\ 6--7]{PARTTWO}\big).   \Thm{EQthm} contains nine statements equivalent to the statement that an element $b^*$ of $B^*$ belong to the Gossez extension of $M$.

The main result of \Sec{NETsec} is \Thm{NETthm}.   We assume in this that $f_0$ is a proper, convex lower semicontinuous function on $B$, $(z^*,z\dbs) \in B^*$, and we establish the existence of a net $(x_\alpha,{x_\alpha}^*)$ of elements of $B$ such that $\limsupn_\alpha f_0(x_\alpha,{x_\alpha}^*) \le {f_0}\dbs(z\dbs,\wh{z^*})$ and various other conditions are satisfied.   It is important to note that {\em monotonicity} is not mentioned at all in \Sec{NETsec}.  The main tool here is the formula for the biconjugate of a the maximum of a finite number of convex functions. 

We apply \Thm{NETthm} in \Thm{Dthm}, in which the we show the equivalence of $M$ being a \cmqd\ subset of $B$ and $M$ being of ``Type (D)'' or ``Type (WD)'' (see \Def{Ddef}), thus adding two additional criteria to \Thm{EQUIVthm}.

We do not know whether the techniques introduced in this paper shed any light on the {\em sum theorems for quasidense functions} (see \cite[Theorem 8.4, pp.\ 1036--7]{PARTONE} and \cite[Theorem 8.8, p\ 1039]{PARTONE} or the equivalence between quasidensity and {\em Type (FP)} \cite[Theorem 10.3, p.\ 21]{PARTTWO} -- related papers by other authors are dicussed in \cite[Remark 10.4, pp.\ 21--22]{PARTTWO}.    
%:\Sec{SNsec}
\section{Banach space notation}\label{SNsec}
%:   \Def{PCdef}
\begin{definition}\label{PCdef} 
If $X$ is a nonzero real Banach space and $f\colon\ X \to \rbar$, we write $\dom\,f := f^{-1}\RR$, and say that $f$ is {\em proper} if $\dom\,f \ne \emptyset$. If $f\colon X \to \rbar$ is proper and $g\colon X \to \RR$, we define $f\ssquare g\colon X \to [-\infty,\infty[$ by $(f\ssquare g)(x) := \infn_{y \in X}\big[f(y) + g(x - y)\big]$.  $f\ssquare g$ is the {\em inf-convolution} of $f$ and $g$.      We say that this inf-convolution is {\em exact} is the infimum is attained.    In this case we write $f\squaree g$.   So $(f\squaree g)(x) = \minn_{y \in X}\big[f(y) + g(x - y)\big]$.   This concept will be used in \Thm{Signsthm}, \Cor{Signscor}(b) and \Thm{AFRLthm} and its consequences.

We write $\PC(X)$ for the set of all proper convex functions from $X$ into $\rbar$ and $\PCLSC(X)$ for the set of all proper convex lower semicontinuous functions from $X$ into $\rbar$.   We write $X^*$ for the dual space of $X$ \big(with the pairing $\bra\cdot\cdot\colon X \times X^* \to \RR$\big).  If $f \in \PC(X)$ then, as usual, we define the {\em Fenchel conjugate}, $f^*$, of $f$ to be the function on $X^*$ given by
%\eqref{FSTAR}
\begin{equation}\label{FSTAR}
x^* \mapsto \supn_X\big[x^* - f\big]\qquad(x^* \in X^*).
\end{equation}
We write $X\dbs$ for the bidual of $X$ \big(with the pairing $\bra\cdot\cdot\colon X^* \times X\dbs \to \RR$\big).\break   If $f \in \PC(X)$ and $f^* \in \PCLSC(X^*)$, we define $f\dbs\colon X\dbs \to \rbar$ by $f\dbs(x\dbs) := \sup_{X^*}\big[x\dbs - f^*\big]$.   If $x \in X$, we write $\wh x$ for the canonical image of $x$ in $X\dbs$, that is to say\quad $(x,x^*) \in X \times X^* \lr \Bra{x^*}{\wh x} = \bra{x}{x^*}$.   If $Y \subset X$ and $x \in X$, then $\I_{Y}(x) := 0$ if $x \in Y$ and $\I_{Y}(x) := \infty$ if $x \in X \setminus Y$. $\I_Y$ is the {\em indicator function of} $Y$.
\end{definition}
The basic result about $f\dbs$ is the {\em Fenchel--Moreau theorem}:
%\Thm{FMthm}
\begin{theorem}\label{FMthm}
Let $f \in \PCLSC(X)$ and $x \in X$.   Then $f\dbs(\wh x) = f(x)$, that is to say, $f(x) = \supn_{x^* \in X^*}\big[\bra{x}{x^*} - f^*(x^*)\big]$.
\end{theorem}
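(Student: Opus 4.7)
The plan is to prove the two inequalities $f\dbs(\wh x) \le f(x)$ and $f\dbs(\wh x) \ge f(x)$ separately.  The first is formal and does not require lower semicontinuity: from \eqref{FSTAR}, for every $x^* \in X^*$ one has $f^*(x^*) \ge \bra{x}{x^*} - f(x)$, so $\bra{x}{x^*} - f^*(x^*) \le f(x)$; taking $\supn_{x^* \in X^*}$ and using $\bra{x^*}{\wh x} = \bra{x}{x^*}$ yields $f\dbs(\wh x) \le f(x)$.

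For the reverse inequality I would invoke the Hahn--Banach separation theorem applied to the nonempty closed convex epigraph $E := \{(y,s) \in X \times \RR : f(y) \le s\}$.  As a preliminary step, I would first establish that $f^*$ is proper by separating the point $\big(y_0, f(y_0) - 1\big) \notin E$ (for any chosen $y_0 \in \dom f$) from $E$: the resulting functional $(x^*, \beta) \in X^* \times \RR$ must satisfy $\beta > 0$ (strict separation evaluated at $(y_0, f(y_0)) \in E$ rules out $\beta \le 0$), and scaling by $1/\beta$ produces some $x_0^* \in X^*$ with $f^*(x_0^*) \in \RR$.

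Now I would argue by contradiction: assume $f\dbs(\wh x) < f(x)$ and choose $\alpha$ with $f\dbs(\wh x) < \alpha < f(x)$.  Then $(x,\alpha) \notin E$, and strict separation yields $(x^*, \beta) \in X^* \times \RR$ and $\gamma \in \RR$ with
\[
\bra{y}{x^*} + \beta s > \gamma > \bra{x}{x^*} + \beta \alpha\qquad\hbox{for all } (y,s) \in E;
\]
letting $s \to \infty$ forces $\beta \ge 0$.  If $\beta > 0$, dividing by $\beta$ and taking the supremum over $y \in \dom f$ of $\bra{y}{-x^*/\beta} - f(y)$ gives $f^*(-x^*/\beta) \le \bra{x}{-x^*/\beta} - \alpha$, hence $\bra{x}{-x^*/\beta} - f^*(-x^*/\beta) \ge \alpha$, contradicting $f\dbs(\wh x) < \alpha$.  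If $\beta = 0$, then $\bra{y - x}{x^*} \ge \delta := \gamma - \bra{x}{x^*} > 0$ for all $y \in \dom f$, so for every $\lambda > 0$ one has $f^*(x_0^* - \lambda x^*) \le f^*(x_0^*) - \lambda\delta - \lambda \bra{x}{x^*}$, and therefore $\bra{x}{x_0^* - \lambda x^*} - f^*(x_0^* - \lambda x^*) \ge \bra{x}{x_0^*} - f^*(x_0^*) + \lambda\delta \to +\infty$, again contradicting $\alpha > f\dbs(\wh x)$.

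The main obstacle is the ``vertical hyperplane'' case $\beta = 0$, which arises precisely when $x$ is separated from $\dom f$ within $X$ itself; here the perturbation $x_0^* - \lambda x^*$, relying on the preliminary fact that $f^* \not\equiv +\infty$, is what converts a horizontal separation into arbitrarily large values of $\bra{x}{\cdot} - f^*(\cdot)$.
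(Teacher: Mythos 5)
Your argument is correct and complete: the easy inequality $f\dbs(\wh x) \le f(x)$ from Young--Fenchel, the preliminary properness of $f^*$ (separating $(y_0,f(y_0)-1)$ from the closed convex epigraph and checking $\beta>0$ at $(y_0,f(y_0))$), and the contradiction argument with the two cases $\beta>0$ and $\beta=0$ all check out, including the crucial perturbation $x_0^*-\lambda x^*$ that disposes of the vertical-hyperplane case. Note, however, that the paper does not actually prove this theorem: it is quoted as a known result with a pointer to Moreau and to Z\u{a}linescu (and to an extension in the Hahn--Banach lecture notes), since it is only used as a tool (e.g.\ in Theorem~\ref{PMFMqt}). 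So your contribution is a self-contained reproduction of the classical Hahn--Banach epigraph-separation proof that those references contain, rather than a different method; what it buys is independence from the cited texts, at the cost of redoing standard convex-analysis material. One small presentational point: in the properness step it is worth stating explicitly that the separation is arranged so that the affine functional is larger on the epigraph than at the separated point (otherwise the sign argument giving $\beta>0$ and the passage to $f^*(-x^*/\beta)<\infty$ needs the signs flipped), and in the main step that $\alpha$ can be chosen real even when $f(x)=\infty$ because $f\dbs(\wh x)<f(x)$ already forces $f\dbs(\wh x)<\infty$.
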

\begin{proof}
See Moreau, \cite[Section 5--6, pp.\ 26--39]{MOREAU} or Z\u{a}linescu, \cite[Theorem 2.3.3 pp.\ 77--78]{ZBOOK}.   This result is extended in \cite[Section 12, pp.\ 58--60]{HBM}. 
\end{proof}
%:\Sec{RLsec}
\section{$q$, $L$ and $r$}\label{RLsec}
We define the norm on $B = E \times E^*$ by $\|(x,x^*)\| := \sqrt{\|x\|^2 + \|x^*\|^2}$,\break and represent $B^*$ by $E^* \times E\dbs$ under the pairing
$$\Bra{(x,x^*)}{(z^*,z\dbs)} := \bra{x}{z^*} + \bra{x^*}{z\dbs}.$$
Our discussions of monotonicity lead us to expressions of the form $\bra{x - y}{x^* - y^*}$.  We now introduce notations that will simplify this and similar expressions.   We have the identity $\bra{x - y}{x^* - y^*} = \bra{x}{x^*} - \bra{x}{y^*} - \bra{y}{x^*} + \bra{y}{y^*}$.   We define the quadratic function $q$ on $B$ by $q(x,x^*) := \bra {x}{x^*}$.   Thus, if $b = (x,x^*)$ and $c = (y,y^*)$, we have $q(b - c) = q(b) - \bra{x}{y^*} - \bra{y}{x^*} + q(c)$.   We now deal with the middle two terms of this.   Since  $\bra{x}{y^*} + \bra{y}{x^*} = \Bra{(x,x^*)}{(y^*,\wh y)}$, we define 
the linear isometry $L\colon\ B \to B^*$ by $L(y,y^*) := (y^*,\wh{y})$, and obtain the relationship $q(b - c) = q(b) - \bra{b}{Lc} + q(c)$.   It is sometimes useful to multiply $b$ and $c$ by scalars, and we end up with: for all $\lambda, \mu \in \RR$ and $d,e \in B$,
%\eqref{not1}
\begin{equation}\label{not1}
q(\lambda d - \mu e) = \lambda^2q(d) -  \lambda\mu\bra{d}{Le} + \mu^2q(e).
\end{equation}
A word is in order about the map $L$.   If $E$ is any Banach space then\quad $\wh{}$\quad is the {\em canonical map} from $E$ into $E\dbs$.   If $B$ is a Banach space of the special form $E \times E^*$ then we can think of $L$ as the {\em canonical map} from $B$ into $B^*$.
\par 
The following {\em weighted parallelogram law} is an immediate consequence of this: if $\lambda + \mu = 1$ and $d,e \in B$ then 
%\eqref{not2}
\begin{equation}\label{not2}
q(\lambda d + \mu e) + \lambda\mu q(d - e) = \lambda q(d) + \mu q(e).
\end{equation}
If we take $\lambda =\mu = \half$ and multiply by 4, we obtain the {\em usual parallelogram law}:
%\eqref{not2U}
\begin{equation}\label{not2U}
q(d + e) + q(d - e) = 2q(d) + 2q(e).
\end{equation}
We define the quadratic function $r$ on $B$ by
%\eqref{rdef}
\begin{equation}\label{rdef}
r := q + \half\|\cdot\|^2.
\end{equation}
The function $r$ appears in the reflexive case in \cite[Theorem~10.3,\ p.\ 36]{MANDM}  and  in Simons--Z\u{a}linescu \cite{SZNZ}, with the symbol ``$G$''.   It was used in the nonreflexive case by Zagrodny in \cite{ZAGRODNY}.   Since $\|L\| \le 1$, for all $b \in B$, $|q(b)| = \half|\bra{b}{Lb}| \le \half\|b\|\|Lb\| \le \half\|b\|^2$, consequently
%\eqref{rpos}
\begin{equation}\label{rpos}
0 \le  r \le \|\cdot\|^2\ \on\ B.
\end{equation}
Now let $d,e \in B$.   Since $\|L\| \le 1$, from \eqref{not1} with $\lambda = 1$ and $\mu = -1$  
%\eqref{not8}
\begin{equation}\label{not8}
q(d + e) = q(d) + \bra{d}{Le} + q(e) \le q(d) + \|d\|\|e\| + q(e).
\end{equation}
Since $\half\|d + e\|^2 \le \half(\|d\| + \|e\|)^2 \le \half\|d\|^2 + \|d\|\|e\| + \half\|e\|^2$, from \eqref{rdef} and \eqref{rpos}, 
%\eqref{not8R}
\begin{equation}\label{not8R}
r(d + e) \le r(d) + 2\|d\|\|e\| + r(e) \le \|d\|^2 + 2\|d\|\|e\| + r(e).
\end{equation}
\par
It is easy to see that, for all $b,c \in B$, $\bra{b}{Lc} = \bra{c}{Lb}$ and, for all $b \in B$, $\half\bra{b}{Lb} = q(b)$.   It follows that, for all $b,c \in B$,
\begin{equation*}
|q(b) - q(c)| = \half|\bra{b}{Lb} - \bra{c}{Lc}| = \half|\bra{b - c}{L(b + c)}| \le \half\|b + c\|\|b - c\|.
\end{equation*}
Consequently,
%\eqref{qcont}
\begin{equation}\label{qcont}
q\hbox{ is continuous}.
\end{equation}
%\Sec{QDsec}
\section{Monotone, Quasidense and \cmqd\ sets}\label{QDsec}
Let $A \subset B$.   Obviously $A$ is {\em monotone} in the usual sense exactly when, for all $d,e \in A$, $q(d - e) \ge 0$.
%:   \Def{QDdef}
\begin{definition}\label{QDdef}
Let $A \subset B$.   We say that $A$ is {\em quasidense in }$B$ if,
%\eqrefQD1}
\begin{equation}\label{QD1}
\all\ b \in B,\quad \infn r(A - b) = 0.
\end{equation} 
Equivalently, for all $(x,x^*) \in B$, 
%\eqrefQD2}
\begin{equation}\label{QD2}
\infn_{(s,s^*) \in A}\big[\half\|s - x\|^2 + \half\|s^* - x^*\|^2 + \bra{s - x}{s^* - x^*}\big] = 0.
\end{equation}
We shall write {\em ``\cmqd''} for ``closed, monotone and quasidense''. 
\end{definition}
%:\Thm{RLMAXthm} [Quasidensity and maximality]
\begin{theorem}[Quasidensity and maximality]\label{RLMAXthm}
Let $A$ be a \cmqd\ subset of $B$.   Then $A$ is maximally monotone. 
\end{theorem}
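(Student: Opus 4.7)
The plan is to take an arbitrary $b = (x,x^*) \in B$ that is monotonically related to every element of $A$ and show $b \in A$. In the condensed notation, the hypothesis on $b$ reads: for all $a \in A$, $q(a - b) \ge 0$.

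Next, I would invoke quasidensity directly at this point $b$. By \eqref{QD1}, $\inf_{a \in A} r(a - b) = 0$, so there is a sequence $(a_n) \subset A$ with $r(a_n - b) \to 0$. Recalling that $r = \half\|\cdot\|^2 + q$, we have
\begin{equation*}
r(a_n - b) = \tfrac12\|a_n - b\|^2 + q(a_n - b).
\end{equation*}
Here the first summand is always nonnegative, and the second summand is nonnegative by the assumption on $b$. Since a sum of two nonnegative quantities tends to $0$, each summand tends to $0$, and in particular $\|a_n - b\| \to 0$.

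Finally, since $A$ is closed and $a_n \to b$, I conclude $b \in A$. This gives the maximality of $A$.

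There is no real obstacle: the proof is essentially a one-line argument once the hypotheses are combined correctly. The only point where care is needed is recognizing that the monotonicity assumption on the candidate $b$ ensures $q(a_n - b) \ge 0$, so that neither term in $r(a_n - b)$ can ``cancel'' the other as the infimum is approached; this is exactly what lets quasidensity force norm-convergence rather than a merely asymptotic condition.
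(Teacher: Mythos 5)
Your proposal is correct and follows essentially the same argument as the paper: since $q(a-b)\ge 0$ for the monotonically related point $b$, quasidensity forces $\tfrac12\|a-b\|^2$ to be small, and closedness of $A$ yields $b\in A$. The only cosmetic difference is that you phrase it via a sequence $(a_n)$ while the paper works with an arbitrary $\eps>0$.
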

\begin{proof}
Let $b \in B$ and $A \cup \{b\}$ be monotone.   Let $\eps > 0$.   By hypothesis, there exists $a \in A$ such that $q(a - b) + \half\|a - b\|^2 < \eps$.   Since $A \cup \{b\}$ is monotone, $q(a - b) \ge 0$, and so $\half\|a - b\|^2 < \eps$.   However, $A$ is closed.   Thus, letting $\eps \to 0$,  $b \in A$.   This completes the proof of the maximality.   \big(This proof is adapted from that of \cite[Lemma 4.7, p.\ 1027]{PARTONE} --- the result appears explicitly in \cite[Theorem 7.4(a), pp.\ 1032--1033]{PARTONE}.\big)     
\end{proof}
%\Rem{TAILrem}
\begin{remark}\label{TAILrem}
The {\em tail operator} is the linear operator $T\colon\ \ell_1 \to \ell_\infty$ defined by $(Tx)_n = \sum_{k \ge n} x_k$.   It was proved in \cite[Example 7.10, p.\ 1034--1035]{PARTONE} that $G(T)$ is maximally monotone but not quasidense.  This proof should be compared to the proof of the allied result that appears in \Ex{TAILNIex}.   Actually, we shall see in (a)$\iff$(b) of \Thm{EQUIVthm} that these two results are equivalent.
\end{remark}
%:\Sec{Psec}}
\section{The function $P_M\colon B \to [0,\infty]$}\label{Psec}
We suppose for the rest of  this paper that $M$ is a maximally monotone subset of $B$.   The main result of this section is \Thm{SUFFthm}, in which we give a sufficient condition for $M$ to be quasidense. 
\par
Let $b \in B$.   If $\inf q(M - b) \ge 0$ then $M\cup\{b\}$ is monotone so, from the maximality, $b \in M$, from which $\inf q(M - b) \le q(b - b) = 0$.   Consequently, $\inf q(M - b) \le 0$, with equality exactly when $b \in M$.  We now define
%\eqref{P}
\begin{equation}\label{P}
P_M(b) = -\inf q(M - b) \in [0,\infty].
\end{equation}
It is clear from the continuity of $q$ \big(see \eqref{qcont}\big) that
%\eqref{P2}
\begin{equation}\label{P2}
P_M\hbox{ is lower semicontinuous\enspace and\enspace}M = {P_M}^{-1}\{0\} = {P_M}^{-1}\big(\,]-\infty,0\,]\big).
\end{equation}
We collect together in \Lem{Plem} the elementary properties of the function $P_M$.   (a) and (b)  will be used in \Lems{FMLPMq} and \ref{PAIRSlem}, respectively, and (c) and (d) will be used in \Thm{SUFFthm}. 
%\Lem{Plem}
\begin{lemma}\label{Plem}
{\rm(a)}\enspace We have $P_M \ssquare q = 0\ \on\ M$.\par
{\rm(b)}\enspace Let $b \in B$ and $m \in M$.   Then $\half\|m - b\|^2 \le r(m - b) + P_M(b)$.\par
{\rm(c)}\enspace Let $a,b \in B$.  Then $-q(b - a) \le 2P_M(a) + 2P_M(b)$.
\par
{\rm(d)}\enspace Let $d,e,t \in B$.   Then    
$$\big(\|e\| - \|t\|\big)^2 \le 4P_M(d -e) + 2r(e) + 4P_M(d -t) + 2r(t).$$   
\end{lemma} 
\begin{proof}
(a)\enspace Let $m \in M$ and $b \in B$.   Since $\inf q(M - b) = -P_M(b)$,\break $q(m - b) \ge -P_M(b)$.   Thus $P_M(b) + q(m - b) \ge 0$.   Taking the infimum over $b$, $(P_M \ssquare q)(m) \ge 0$.   On the other hand, from \eqref{P2}, $$(P_M \ssquare q)(m) \le P_M(m) + q(m - m) = 0.$$ 
This completes the proof of (a).
\par
(b)\enspace By virtue of \eqref{rdef}, $\half\|m - b\|^2 = r(m - b) -q(m - b)$ and, by virtue of  \eqref{P}, $-q(m - b) \le P_M(b)$.   This completes the proof of (b).
\par
(c)\enspace From \eqref{P}, and the parallelogram law, \eqref{not2U}, with $d = m - a$ and\break $e = m - b$,
\begin{align*}
- q(b - a) &\le 4P_M(\half a + \half b) - q(b - a)
= -4\infn_{m \in M}q(m - \half a - \half b) - q(b - a)\\
&= -\infn_{m \in M}q(2m - a - b) - q(b - a)\\
&= -\infn_{m \in M}\big[q(2m - a - b) + q(b - a)\big]\\
&= -\infn_{m \in M}\big[2q(m - a) + 2q(m - b)\big]\\
&\le -2\infn_{m \in M}q(m - a) - 2\infn_{m \in M}q(m - b) = 2P_M(a) + 2P_M(b). 
\end{align*}
This completes the proof of (c).
\par
(d)\enspace From the fact that $\|L\| \le 1$, and  (c) with $a = d - e$ and $b = d - t$, 
\begin{align*}
\big(\|e\| - \|t\|\big)^2 &= \|e\|^2 + \|t\|^2 - 2\|e\|\|t\| \le \|e\|^2 + \|t\|^2 + 2\bra{e}{Lt}\\
&= \|e\|^2 + \|t\|^2 - 2q(e - t) + 2q(e) + 2q(t)\\ 
&\le 4P_M(d -e) + 2q(e) + \|e\|^2 + 4P_M(d -t) + 2q(t) + \|t\|^2.
\end{align*}
\eqref{rdef} gives\quad $2q(e) + \|e\|^2 = 2r(e)$\quad and\quad $2q(t) + \|t\|^2 = 2r(t)$,\quad which completes the proof of (d).
\end{proof}
%\Thm{SUFFthm}
\begin{theorem}\label{SUFFthm}
Suppose that
%\eqref{SUFF1}
\begin{equation}\label{SUFF1}
P_M \ssquare r \le 0\ on\ B.
\end{equation}  
\par
\noindent
{\rm(a)}\enspace Let $c \in B$. Then $\inf\|M - c\|^2 \le 4P_M(c)$.  
\par
\noindent
{\rm(b)}\enspace The set $M$ is \cmqd.
\end{theorem}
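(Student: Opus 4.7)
The plan is to iteratively apply the hypothesis $(P_M \ssquare r)(\cdot) \le 0$ to build a Cauchy sequence landing in $M$, and read off both (a) and (b) from that single construction.

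Fix $\delta > 0$ and choose $\eps_n \downarrow 0$ with $\sqrt{\eps_n}$ geometrically summable, e.g.\ $\eps_n := \delta/4^n$. Set $b_0 := c_1$. Given $b_n \in B$, the hypothesis supplies $b_{n+1} \in B$ with
\[
P_M(b_{n+1}) + r(b_n - b_{n+1}) < \eps_{n+1};
\]
since $P_M \ge 0$ and $r \ge 0$ (see \eqref{rpos}), each summand is itself $< \eps_{n+1}$. Applying \Lem{DIFFNMlem} with its $b$ and $c_1$ both equal to $b_n$ and its $b_1$ equal to $b_{n+1}$ (so $\|b-c_1\| = 0$ and $r(b-c_1) = 0$), I obtain
\[
\|b_n - b_{n+1}\|^2 \le 4P_M(b_n) + 4P_M(b_{n+1}) + 2r(b_n - b_{n+1}) \le 4P_M(b_n) + 4\eps_{n+1}.
\]
For $n = 0$ this gives $\|b_0 - b_1\|^2 \le 4P_M(c_1) + 4\eps_1$; for $n \ge 1$, since $P_M(b_n) < \eps_n$, the bound is $4(\eps_n + \eps_{n+1})$. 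Hence $\sum_n \|b_n - b_{n+1}\|$ converges, $(b_n)$ is Cauchy in $B$, and its limit $b_\infty$ satisfies $P_M(b_\infty) \le \liminf_n P_M(b_n) = 0$ by lower semicontinuity (see \eqref{P2}), so $b_\infty \in M$.

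For (a), the triangle inequality and $\sqrt{a+b} \le \sqrt a + \sqrt b$ give $\|c_1 - b_\infty\| \le 2\sqrt{P_M(c_1)} + C\sqrt\delta$ for some constant $C$ depending only on $(\eps_n)$. Squaring and sending $\delta \to 0$ yields $\inf_{m \in M}\|m - c_1\|^2 \le 4P_M(c_1)$. For (b), maximal monotonicity of $M$ already supplies closedness and monotonicity, so to conclude $M$ is \cmqd\ it remains to prove quasidensity. Rerun the construction with $b_0 := b$ arbitrary in $B$. From \eqref{not8} and the triangle inequality one has the sub-additive bound $r(u+v) \le r(u) + r(v) + 2\|u\|\|v\|$; combined with $r \le \|\cdot\|^2$ from \eqref{rpos} and the decomposition $b - b_n = (b - b_1) + (b_1 - b_n)$,
\[
r(b - b_n) \le r(b - b_1) + \|b_1 - b_n\|^2 + 2\|b - b_1\|\|b_1 - b_n\|,
\]
whose three terms on the right are respectively $< \eps_1$, $O(\delta)$, and $O\big(\sqrt{\delta\,P_M(b)}\big) + O(\delta)$. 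Passing $n \to \infty$ by continuity of $r$ (a consequence of \eqref{qcont}) and then $\delta \to 0$ forces $r(b - b_\infty) \to 0$, hence $\inf_{m \in M} r(b - m) = 0$, which is quasidensity.

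The main obstacle is that the hypothesis only produces small $P_M + r$ at auxiliary points of $B$, not within $M$. The iteration is what pushes those auxiliary points into the closed set $M$, and the sub-additive estimate for $r$ (whose cross term $2\|u\|\|v\|$ is controlled because $\|L\| \le 1$) is the technical ingredient that transfers the small $r$-residual at $b_1$ to a small $r$-residual at the limit $b_\infty$, since $\|b_1 - b_\infty\|$ can be forced to be $O(\sqrt\delta)$ independently of $P_M(b)$.
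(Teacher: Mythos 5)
Your part (a) is correct and is essentially the paper's own argument: the same inductive use of \eqref{SUFF1} with geometrically decaying tolerances, the same estimate $\|b_n-b_{n+1}\|^2\le 2r(b_n-b_{n+1})+4P_M(b_n)+4P_M(b_{n+1})$ (your degenerate application of \Lem{DIFFNMlem} with $b=c_1=b_n$ is just \Lem{PHPHlem} combined with $\half\|d\|^2=r(d)-q(d)$), and lower semicontinuity of $P_M$ to place the limit in $M$.

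Part (b), however, has a genuine gap, located at the cross term $2\|b-b_1\|\,\|b_1-b_n\|$. Your only control of $\|b-b_1\|$ is $\|b-b_1\|^2\le 2r(b-b_1)+4P_M(b)+4P_M(b_1)$, i.e. $\|b-b_1\|=O\big(\sqrt{P_M(b)}\big)+O(\sqrt\delta)$, and this is vacuous when $P_M(b)=+\infty$. That case cannot be excluded: quasidensity must be verified at \emph{every} $b\in B$, and hypothesis \eqref{SUFF1} is perfectly compatible with $P_M$ taking the value $+\infty$. For example, $M=\{0\}\times E^*$ is maximally monotone with $P_M=\I_M$ (so $P_M(x,x^*)=+\infty$ whenever $x\ne0$), yet $(P_M\ssquare r)(x,x^*)=\inf_{s^*}r(x,x^*-s^*)=0$ for all $(x,x^*)$: take $s^*:=x^*+\|x\|u^*$ with $\|u^*\|=1$ and $\bra{x}{u^*}=\|x\|$. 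Nor can you treat $\|b-b_1\|$ as a constant, because your $b_1=b_1(\delta)$ is produced with tolerance $\eps_1=\delta/4$ and so moves as $\delta\to0$, and smallness of $r(b-b_1)$ gives no bound at all on $\|b-b_1\|$, since $r$ vanishes on vectors of arbitrarily large norm. This is precisely the difficulty the paper's proof of (b) is engineered around: it first fixes \emph{one} auxiliary point $b_1$ with the $\delta$-independent tolerance $\hexth$ in \eqref{SUFF3}, then for each $\delta$ chooses $c_1$ with tolerance $\fourth\delta^2$, and uses \Lem{DIFFNMlem} with three genuinely distinct points to compare the two distances, $\big(\|b-c_1\|-\|b-b_1\|\big)^2<1$, hence $\|b-c_1\|\le\|b-b_1\|+1$ --- a bound independent of $\delta$ that never involves $P_M(b)$; the final estimate $r(m-b)\le 2\delta\|b-b_1\|+4\delta<\eps$ then closes the argument for every $b$. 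To repair your proof you need some such $\delta$-uniform comparison (for instance, keep your iteration but estimate $\|b-b_1(\delta)\|$ against a single fixed approximant via \Lem{DIFFNMlem}); as written, your argument for (b) fails exactly at the points where $P_M(b)=\infty$.
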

\begin{proof}(a) We can clearly suppose that  $P_M(c) < \infty$.   Let $P_M(c) < \Lambda < \infty$.   Let $\eta \in\ ]0,1[$ be chosen so small that
$$4\tsumn_{n = 1}^\infty \eta^n + 2\sqrt{P_M(c)} < 2\sqrt\Lambda.$$
If $b_1, \dots, b_{n - 1}\in B$ then, from \eqref{SUFF1}, $(P_M \ssquare r)(c + \tsum_{k = 1}^{n-1} b_k) \le 0$, and so, for all $n \ge 1$, there exists $b_n \in B$ such that $P_M(c + \tsum_{k = 1}^n b_k) + r(-b_n) < \eta^{2n + 2}$.   Since $P_M, r \ge 0$ on $B$ and $r$ is even, we can choose $b_1,b_2\dots \in B$ inductively so that, for all $n \ge 1$,
\begin{equation}\label{SUFF2}
P_M(c + \tsum_{k = 1}^n b_k) \vee r(b_n) = P_M(c + \tsum_{k = 1}^n b_k) \vee r(-b_n) < \eta^{2n + 2}.
\end{equation}
From \eqref{rdef}, \eqref{SUFF2} and \Lem{Plem}(c) with $a = c + \tsum_{k = 1}^{n - 1} b_k$ and $b = c + \tsum_{k = 1}^{n} b_k$, for all $n \ge 1$,
\begin{align*}
\half\|b_n\|^2 &= r(b_n) - q(b_n)
\le \eta^{2n + 2} + 2P_M(c + \tsum_{k = 1}^{n - 1} b_k) + 2P_M(c + \tsum_{k = 1}^n b_k)\\
&\le \eta^{2n + 2} + 2P_M(c + \tsum_{k = 1}^{n - 1} b_k) + 2\eta^{2n + 2} = 3\eta^{2n + 2} + 2P_M(c + \tsum_{k = 1}^{n - 1} b_k).
\end{align*}
When $n = 1$, this gives us $\half\|b_{1}\|^2 \le  3\eta^{4} + 2P_M(c)$.   Multiplying this by 2,
$\|b_1\|^2 \le  6\eta^{4} + 4P_M(c) \le  16\eta^{2} + 4P_M(c)$ and so, in this case,
%\eqref{SUFF8}
\begin{equation}\label{SUFF8}
\|b_{1}\| \le  4\eta + 2\sqrt{P_M(c)}.
\end{equation}
On the other hand, when $n \ge 2$, this gives us $\half\|b_{n}\|^2 \le  3\eta^{2n + 2} + 2\eta^{2n} < 5\eta^{2n}$, from which\quad $\|b_n\| \le 4\eta^{n}$.\quad  It follows that $\tsum_{n = 1}^\infty b_n$ exists.   Let\break  $m := c + \sum_{n = 1}^\infty b_n$.  From \eqref{P2} and \eqref{SUFF2}, $P_M(m) \le 0$, and so $m \in M$.  From \eqref{SUFF8},
\begin{align*}
\|m - c\| &= \|\tsum_{n = 1}^\infty b_n\| \le \|b_1\| + \tsum_{n = 2}^\infty\|b_n\|  \le  4\eta + 2\sqrt{P_M(c)} + 4\tsum_{n = 2}^\infty\eta^n\\
&= 4\tsum_{n = 1}^\infty \eta^n + 2\sqrt{P_M(c)} < 2\sqrt\Lambda.
\end{align*}
Thus $\|m - c\|^2 < 4\Lambda$, and (a) follows by letting $\Lambda \to P_M(c)$.
\par
(b)\enspace Let $d \in B$ and $\eps > 0$.   Since $(P_M \ssquare r)(d) \le 0$, there exists $t \in B$ such that $P_M(d - t) + r(t) < \hexth$.   Since $P_M, r \ge 0$ on $B$, 
\begin{equation}\label{SUFF3}
P_M(d - t) \vee r(t) < \hexth.
\end{equation}
Let $0 < \delta < \half$ and
%\eqref{SUFF4}
\begin{equation}\label{SUFF4}
2\delta(\delta + \|t\| + 1) < \eps.
\end{equation}
Similarly, there exists $e \in B$, depending on $\delta$, such that
%\eqref{SUFF5}
\begin{equation}\label{SUFF5}
P_M(d - e) \vee r(e) < \fourth\delta^2 < \hexth.
\end{equation}
From \eqref{SUFF3}, \eqref{SUFF5} and \Lem{Plem}(d), $\big(\|e\| - \|t\|\big)^2 \le \fourth + \eighth + \fourth + \eighth < 1$ and so 
%\eqref{SUFF7}
\begin{equation}\label{SUFF7}
\|e\| < \|t\| + 1.
\end{equation}
From (a), with $c := d - e$, and \eqref{SUFF5} we can choose $m \in M$ such that
%\eqref{SUFF6}
\begin{equation}\label{SUFF6}
\|m - d + e\| <  \delta. 
\end{equation}
From \eqref{not8R} with $d := m - d + e$, \eqref{SUFF4}--\eqref{SUFF6}, and the  fact that $r$ is even, 
\begin{align*}
r(m - d) &= r\big([m - d + e] - e\big) \le \|m - d + e\|^2 + 2\|m - d + e\|\|e\| + r(e)\\
&\le \delta^2 + 2\delta\|t\| + 2\delta + \fourth\delta^2 \le 2\delta(\delta + \|t\| + 1) < \eps.
\end{align*}
Since this holds for an arbitrary $d \in B$ and $\eps > 0$, $M$ is quasidense in $B$.   $M$ is obviously closed.   Thus $M$ is \cmqd.
\end{proof}
%:\Sec{RLTsec}
\section{$\qt$, $\Lt$ and $\rt$}\label{RLTsec}
The norm on $B^* = E^* \times E\dbs$ is given by  $\|(y^*,y\dbs)\| := \sqrt{\|y^*\|^2 + \|y\dbs\|^2}$. 
By analogy with \Sec{RLsec}, we define the continuous quadratic function $\qt$ on $B^*$ by $\qt(y^*,y\dbs) := \bra {y^*}{y\dbs}$, and the linear isometry $\Lt\colon\ B^* \to B\dbs$ by $\Lt(y^*,y\dbs) = \big(y\dbs,\wh{y^*}\big)$.      Then, for all $d^*,e^*\in B^*$, $\Bra{d^*}{\Lt e^*} = \Bra{e^*}{\Lt d^*}$ and, for all $d^* \in B^*$, $\half\Bra{d^*}{\Lt d^*} = \qt(d^*)$.   By analogy with \eqref{not1}, for all $\lambda,\mu \in \RR$ and $d^*,e^* \in B^*$,
%\eqref{not3}
\begin{equation}\label{not3}
\qt(\lambda d^* - \mu e^*) = \lambda^2\qt(d^*) - \lambda\mu \Bra{d^*}{\Lt e^*} + \mu^2\qt(e^*).
\end{equation}
By analogy with \eqref{not2}, if $\lambda + \mu = 1$ and  $b^*,c^* \in B^*$ then we have the {\em weighted parallelogram law}
%\eqref{not10}
\begin{equation}\label{not10}
\qt(\lambda b^* + \mu c^*) + \lambda\mu \qt(b^* - c^*) = \lambda \qt(b^*) + \mu \qt(c^*).
\end{equation}
Let $b \in B$.   Replacing $b^*$ and $c^*$ in \eqref{not10} by $Lb - d^*$ and $Lb - e^*$, respectively, 
%\eqref{not4}
\begin{equation}\label{not4}
\qt(Lb - \lambda d^* - \mu e^*) + \lambda\mu\qt(e^* - d^*) = \lambda\qt(Lb - d^*) + \mu\qt(Lb - e^*).
\end{equation}
It is easy to see that, for all $b \in B$ and $b^* \in B^*$,
%\eqref{not6}
\begin{equation}\label{not6}
\Lt L(b) = \wh b\quand\Bra{Lb}{\Lt b^*} =  \bra{b}{b^*}.
\end{equation}
We define the quadratic function $\rt$ on $B^*$ by 
%\eqref{rtdef}
\begin{equation}\label{rtdef}
\rt := \qt+ \half\|\cdot\|^2, \hbox{ and so, as in \eqref{rpos}, } 0 \le \rt \le \|\cdot\|^2\ \on\ B^*.
\end{equation}
We note then that
%\eqref{not7}
\begin{equation}\label{not7}
\qt \circ L = q \quand \rt\circ L = r.
\end{equation}
From \eqref{not3} with $\lambda = \mu = \half$, \eqref{not7} and \eqref{not6}, for all $b \in B$ and $b^* \in B^*$, 
%\eqref{not9}
\begin{equation}\label{not9}
\qt(Lb - b^*) = q(b) - \bra{b}{b^*} + \qt(b^*).
\end{equation}
%:\Sec{Fsec}
\section{The function $F_M\colon B^* \to\ ]-\infty,\infty]$}\label{Fsec}
%\Def{Fdef}
\begin{definition}\label{Fdef}
Let $M$ be a maximally monotone subset of $B$.   If $b^* \in B^*$, we define
%\eqref{DEL}
\begin{equation}\label{DEL}
 F_M(b^*):=  -\infn_{b \in B}\big[P_M(b) + \qt(b^* - Lb)\big] \in \,]-\infty,\infty].
\end{equation}
See \Thm{PMFMqt} for an unexpected explicit formula for $P_M$ in terms of $F_M$.  The other significant results in this section are \Thm{Signsthm} and \Cor{Signscor}.  
\end{definition}
%\Lem{FMLPMq}
\begin{lemma}\label{FMLPMq}
$F_M \circ L = -(P_M\ssquare q)$ on $B$ and $F_M \circ L =  0$ on $M$.  
\end{lemma}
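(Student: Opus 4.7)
The plan is that this lemma is a direct computation exploiting the identity $\qt\circ L = q$ from \eqref{not7} together with the linearity of $L$, and then invoking \Lem{PIQlem} for the second assertion.

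For the first identity, I would fix $c \in B$ and unpack $F_M(Lc)$ from \eqref{DEL}:
\begin{equation*}
F_M(Lc) = -\infn_{b \in B}\big[P_M(b) + \qt(Lc - Lb)\big].
\end{equation*}
Since $L$ is linear, $Lc - Lb = L(c - b)$, and by \eqref{not7} we have $\qt(L(c - b)) = q(c - b)$. Substituting this gives
\begin{equation*}
F_M(Lc) = -\infn_{b \in B}\big[P_M(b) + q(c - b)\big] = -(P_M \ssquare q)(c),
\end{equation*}
where the last equality is just the definition of inf-convolution in \Def{PCdef} (with the roles of variables as $b \mapsto y$, $c \mapsto x$). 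This gives $F_M \circ L = -(P_M \ssquare q)$ on $B$.

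For the second identity, I would simply combine the first identity with \Lem{PIQlem}, which asserts $P_M \ssquare q = 0$ on $M$. Therefore $F_M \circ L = -(P_M \ssquare q) = 0$ on $M$.

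There is really no obstacle here: the content is purely notational, the substantive fact being that $L$ intertwines $q$ and $\qt$. The only thing worth being careful about is the sign convention in \eqref{DEL} and the direction of the argument in the inf-convolution, both of which match up correctly via $Lc - Lb = L(c - b)$.
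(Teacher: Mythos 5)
Your proof is correct and is essentially identical to the paper's: both unpack $F_M(Lc)$ from \eqref{DEL}, use the linearity of $L$ together with $\qt\circ L = q$ from \eqref{not7} to rewrite $\qt(Lc-Lb)$ as $q(c-b)$, recognize the resulting infimum as $(P_M\ssquare q)(c)$, and then invoke \Lem{PIQlem} for the vanishing on $M$. No differences worth noting.
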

\begin{proof}
If $c \in B$ then, from \eqref{DEL},\quad $F_M(Lc) =  -\infn_{b \in B}\big[P_M(b) + \qt(Lc - Lb)\big]$.\quad   Thus, from \eqref{not7}, $F_M(Lc) = -\infn_{b \in B}\big[P_M(b) + q(c - b)\big]
= -(P_M\ssquare q)(c)$.   The second observation follows from the first and \Lem{Plem}(a). 
\end{proof}
%
%\Def{PHIdef}
\begin{definition}\label{PHIdef}
Let $\Phi_M := P_M + q$ on $B$.  From \eqref{P}, for all $b \in B$, we have $\Phi_M(b)\in [q(b),\infty]$.   See \Rem{PHrem} for more on this function.
\end{definition}
%\Lem{Flem}
\begin{lemma}\label{Flem}
{\rm(a)}\enspace $\Phi_M \in \PCLSC(B)$.\par\noindent
{\rm(b)}\enspace For all $b^* \in B^*$, ${\Phi_M}^*(b^*) = F_M(b^*) + \qt(b^*)$.\par\noindent
{\rm(c)}\enspace  For all $b^* \in B^*$, ${\Phi_M}\dbs\big(\Lt b^*\big) = \qt(b^*) -(F_M \ssquare \qt)(b^*)$.
\end{lemma}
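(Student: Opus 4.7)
My approach is to handle the three parts in sequence, with part (a) enabling the use of the Fenchel--Moreau machinery in (b) and (c).

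For (a), the key move is to rewrite $\Phi_M$ as a supremum of continuous affine functions. Applying \eqref{not1} with $\lambda = \mu = 1$, $d = m$, $e = b$ gives $q(m - b) = q(m) - \bra{m}{Lb} + q(b)$, whence
\begin{equation*}
\Phi_M(b) = P_M(b) + q(b) = \sup_{m \in M}\big[\bra{m}{Lb} - q(m)\big].
\end{equation*}
This representation makes convexity and lower semicontinuity immediate. Properness follows because $P_M(m) = 0$ for any $m \in M$ (by \eqref{P2}), so $\Phi_M(m) = q(m) \in \RR$.

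For (b), I would compute ${\Phi_M}^*(b^*) = \sup_{b \in B}\big[\bra{b}{b^*} - P_M(b) - q(b)\big]$ directly. Identity \eqref{not9}, namely $\qt(Lb - b^*) = q(b) - \bra{b}{b^*} + \qt(b^*)$, together with the evenness of $\qt$ (so $\qt(b^* - Lb) = \qt(Lb - b^*)$), converts the bracket into $\qt(b^*) - P_M(b) - \qt(b^* - Lb)$. Pulling $\qt(b^*)$ out of the supremum and comparing with \eqref{DEL} gives exactly $\qt(b^*) + F_M(b^*)$.

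For (c), part (a) and standard Banach-space conjugation ensure that ${\Phi_M}^* \in \PCLSC(B^*)$, so ${\Phi_M}\dbs$ is defined. Using (b),
\begin{equation*}
{\Phi_M}\dbs(\Lt b^*) = \sup_{c^* \in B^*}\big[\bra{c^*}{\Lt b^*} - F_M(c^*) - \qt(c^*)\big].
\end{equation*}
I would then apply the symmetry $\bra{c^*}{\Lt b^*} = \bra{b^*}{\Lt c^*}$ and the $\lambda = \mu = 1$ instance of \eqref{not3}, that is $\qt(b^* - c^*) = \qt(b^*) - \bra{b^*}{\Lt c^*} + \qt(c^*)$, to solve for $\bra{b^*}{\Lt c^*}$. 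Substitution eliminates both $\bra{b^*}{\Lt c^*}$ and $\qt(c^*)$, leaving $\qt(b^*) - F_M(c^*) - \qt(b^* - c^*)$ inside the supremum. Pulling $\qt(b^*)$ out and rewriting $\sup[-\,\cdot\,]$ as $-\inf[\cdot]$ yields $\qt(b^*) - (F_M \ssquare \qt)(b^*)$.

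The main obstacle is purely bookkeeping: keeping the pairing $B^* \times B\dbs \to \RR$ distinct from $B \times B^* \to \RR$, and deploying the symmetry of $\Lt$ together with the evenness of $\qt$ in the right spots. Once the supremum-of-affines representation in (a) is in place, the proofs of (b) and (c) reduce to mechanical algebraic rearrangement driven by \eqref{not3} and \eqref{not9}.
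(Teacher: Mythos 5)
Your proof is correct. For (a) and (b) it is essentially the paper's own argument: the same rewriting of $\Phi_M$ as a supremum of continuous affine functions $b \mapsto \bra{b}{Lm} - q(m)$ (your $\bra{m}{Lb}$ equals $\bra{b}{Lm}$ by the symmetry of $L$), and the same use of \eqref{not9} together with the evenness of $\qt$ to identify ${\Phi_M}^*$ with $F_M + \qt$; your explicit remark on properness of $\Phi_M$ is a small point the paper leaves implicit. The only genuine difference is in (c): the paper deliberately omits that proof, since the statement is never used later, whereas you supply one. Your computation there is sound --- part (b) gives ${\Phi_M}^* = F_M + \qt \in \PCLSC(B^*)$ so ${\Phi_M}\dbs$ is defined, and the symmetry $\Bra{c^*}{\Lt b^*} = \Bra{b^*}{\Lt c^*}$ combined with the $\lambda = \mu = 1$ case of \eqref{not3} turns the supremum into $\qt(b^*) - \infn_{c^* \in B^*}\big[F_M(c^*) + \qt(b^* - c^*)\big] = \qt(b^*) - (F_M \ssquare \qt)(b^*)$, exactly as claimed. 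So you have filled in the one piece the paper chose not to write out.
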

\begin{proof}
From \eqref{P} and \eqref{not8}, for all $b \in B$, 
\begin{align*}
\Phi_M(b) &= P_M(b) + q(b) = -\infn_{m \in M} q(m - b) + q(b)\\
&= -\infn_{m \in M}\big[q(m) - \bra{b}{Lm} + q(b)\big] + q(b)\\
&=  \supn_{m \in M}\big[\bra{b}{Lm} - q(m)\big],
\end{align*}
from which (a) follows immediately.   From \Def{PHIdef}, \eqref{not9} and \eqref{DEL},  
\begin{align*}
{\Phi_M}^*(b^*) &= \supn_{b \in B}\big[\bra{b}{b^*} - \Phi_M(b)\big]
= \supn_{b \in B}\big[\bra{b}{b^*} - P_M(b) - q(b)]\\
&= \supn_{b \in B}\big[- P_M(b) -  \qt(b^* - Lb) + \qt(b^*)\big]\\
&= -\infn_{b \in B}\big[P_M(b) + \qt(b^* - Lb)\big] + \qt(b^*) = F_M(b^*) + \qt(b^*).
\end{align*}
This completes the proof of (b).
Let $b^* \in B^*$.  From (b) and \eqref{not3},
\begin{align*}
{\Phi_M}\dbs\big(\Lt b^*\big) -  \qt(b^*) 
&= \supn_{c^* \in B^*}\big[\bra{c^*}{\Lt b^*} - {\Phi_M}^*(c^*)\big] -  \qt(b^*)\\
&= \supn_{c^* \in B^*}\big[\bra{c^*}{\Lt b^*} - F_M(c^*) - \qt(c^*) - \qt(b^*)\big]\\
&= -\infn_{c^* \in B^*}\big[F_M(c^*) + \qt(c^*) -\bra{c^*}{\Lt b^*} + \qt(b^*)\big]\\
&= -\infn_{c^* \in B^*}\big[F_M(c^*) + \qt(b^* - c^*)\big] = - (F_M \ssquare \qt)(b^*).
\end{align*}
This completes the proof of (c).
\end{proof}
%
%\Thm{PMFMqt}
\begin{theorem}\label{PMFMqt}
$P_M = - (F_M \ssquare \qt) \circ L$ on $B$.
\end{theorem}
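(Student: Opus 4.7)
The plan is to unpack the definition of $(F_M \ssquare \qt)(Lb)$ and massage it into a Fenchel biconjugate of $\Phi_M$, so that the Fenchel--Moreau theorem finishes the job.

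First I would fix $b \in B$ and use the key identity \eqref{not9}, namely $\qt(Lb - b^*) = q(b) - \bra{b}{b^*} + \qt(b^*)$, to rewrite
\begin{equation*}
(F_M \ssquare \qt)(Lb) = \infn_{b^* \in B^*}\bigl[F_M(b^*) + \qt(Lb - b^*)\bigr] = q(b) + \infn_{b^* \in B^*}\bigl[F_M(b^*) + \qt(b^*) - \bra{b}{b^*}\bigr].
\end{equation*}
Next I would invoke \Lem{Flem}(b), which identifies $F_M + \qt$ with ${\Phi_M}^*$, to rewrite the infimum as $-\supn_{b^* \in B^*}\bigl[\bra{b}{b^*} - {\Phi_M}^*(b^*)\bigr] = -{\Phi_M}\dbs(\wh b)$.

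Now the main point is that \Lem{Flem}(a) gives $\Phi_M \in \PCLSC(B)$, so the Fenchel--Moreau theorem (\Thm{FMthm}) applies and yields ${\Phi_M}\dbs(\wh b) = \Phi_M(b) = P_M(b) + q(b)$ by \Def{PHIdef}. Substituting back,
\begin{equation*}
(F_M \ssquare \qt)(Lb) = q(b) - \Phi_M(b) = q(b) - P_M(b) - q(b) = -P_M(b),
\end{equation*}
which is the desired identity. Since $b \in B$ was arbitrary, this gives $P_M = -(F_M \ssquare \qt)\circ L$ on $B$.

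There is no real obstacle: the proof is essentially a one-line reorganization, provided one spots that \eqref{not9} is exactly what converts a $\qt$-inf-convolution evaluated at $Lb$ into the Fenchel biconjugate of $\Phi_M$ at $\wh b$. The only thing to be careful about is that the use of Fenchel--Moreau requires $\Phi_M$ to be in $\PCLSC(B)$, which is guaranteed by \Lem{Flem}(a), and that $F_M$ is proper in order for the inf-convolution to be meaningful --- but properness of $F_M$ follows from \Lem{Flem}(b) together with the fact that $\Phi_M$ is proper.
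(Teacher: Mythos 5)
Your proof is correct and is essentially the paper's own argument read in the opposite direction: the paper starts from $P_M(b) = \Phi_M(b) - q(b)$ and applies Fenchel--Moreau, \Lem{Flem}(b) and \eqref{not9} to arrive at $-(F_M \ssquare \qt)(Lb)$, while you start from $(F_M \ssquare \qt)(Lb)$ and run the same three ingredients backwards. The extra remark on the properness of $F_M$ (via \Lem{Flem}(b)) is a harmless and correct addition.
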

\begin{proof}Let $b \in B$.   Then, from \Def{PHIdef}, \Lem{Flem}(b), the Fenchel--Moreau Theorem (\Thm{FMthm}) and \eqref{not9},  
\begin{align*}
P_M(b) &= \Phi_M(b) - q(b) = \supn_{b^* \in B^*}\big[\bra{b}{b^*} - {\Phi_M}^*(b^*) - q(b)\big]\\
&= \supn_{b^* \in B^*}\big[\bra{b}{b^*} - F_M(b^*) -\qt(b^*) - q(b)\big]\\
&= -\infn_{b^* \in B^*}\big[F_M(b^*) + q(b) - \bra{b}{b^*} + \qt(b^*)\big]\\
&= -\infn_{b^* \in B^*}\big[F_M(b^*) + \qt(Lb - b^*)\big] = - (F_M \ssquare \qt)(Lb).
\end{align*}
This completes the proof of \Thm{PMFMqt}. 
\end{proof}
%\Lem{JClem}
\begin{lemma}\label{JClem}
Let $c \in B$, and define the real, convex continuous function $j_c$ on $B$ by ${j_c}(b) := \half\|c - b\|^2$.  Let $a^* \in B^*$.   Then ${j_c}^*(a^*) = \half\|a^*\|^2 + \bra{c}{a^*}$. 
\end{lemma}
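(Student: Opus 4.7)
The plan is a direct computation from the definition \eqref{FSTAR} of the Fenchel conjugate, reducing the problem to the well-known formula for the conjugate of $\half\|\cdot\|^2$ on a Banach space.

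First, I would write ${j_c}^*(a^*) = \sup_{b \in B}\big[\bra{b}{a^*} - \half\|c - b\|^2\big]$ and change variable by setting $u := c - b$. Since $\bra{b}{a^*} = \bra{c}{a^*} - \bra{u}{a^*}$, the constant $\bra{c}{a^*}$ pulls out of the supremum, leaving
$$ {j_c}^*(a^*) = \bra{c}{a^*} + \supn_{u \in B}\big[-\bra{u}{a^*} - \half\|u\|^2\big]. $$
A further change $v := -u$ converts the residual supremum into $\supn_{v \in B}\big[\bra{v}{a^*} - \half\|v\|^2\big]$, which is the standard Fenchel conjugate of $\half\|\cdot\|^2$ evaluated at $a^*$.

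Second, I would verify that this last supremum equals $\half\|a^*\|^2$. For the upper bound, $\bra{v}{a^*} \le \|v\|\|a^*\|$, so $\bra{v}{a^*} - \half\|v\|^2 \le \|v\|\|a^*\| - \half\|v\|^2 \le \half\|a^*\|^2$, the last step by maximizing the quadratic in $\|v\|$. For the lower bound, by definition of the dual norm one can pick $v_n \in B$ with $\|v_n\| = \|a^*\|$ and $\bra{v_n}{a^*} \to \|a^*\|^2$, giving $\bra{v_n}{a^*} - \half\|v_n\|^2 \to \half\|a^*\|^2$. Combining the two gives the claimed formula $ {j_c}^*(a^*) = \half\|a^*\|^2 + \bra{c}{a^*}$.

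The computation is wholly routine; the only point that is not immediate in a general (nonreflexive) Banach space is the attainment-free lower bound for $\sup_v\big[\bra{v}{a^*} - \half\|v\|^2\big]$, which requires the approximating sequence argument above rather than a direct Hilbert-space style optimizer. No deeper tool is needed.
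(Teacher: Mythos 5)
Your proof is correct and follows essentially the same route as the paper: substitute $b \mapsto c-b$ in the supremum, pull out $\bra{c}{a^*}$, and reduce to the standard identity $\supn_{v \in B}\big[\bra{v}{a^*} - \half\|v\|^2\big] = \half\|a^*\|^2$. The only difference is that you verify this last identity explicitly (with the approximating sequence for the non-attained supremum), which the paper simply takes as known.
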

\begin{proof}${j_c}^*(a^*) = \supn_{b \in B}\big[\bra{b}{a^*} - {j_c}(b)\big] = \supn_{b \in B}\big[\bra{c - b}{a^*} - {j_c}(c - b)\big]
= \supn_{b \in B}\big[\bra{b}{-a^*} - \half\|b\|^2\big] + \bra{c}{a^*} = \half\|-a^*\|^2 + \bra{c}{a^*}$.   This completes the proof of \Lem{JClem}.
\end{proof}
%\Thm{Signsthm}
\begin{theorem}[Result with exactness]\label{Signsthm}
$P_M \ssquare r =  - (F_M \squaree \rt)\circ L$ on $B$.
\end{theorem}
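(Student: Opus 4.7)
The plan is to fix $c \in B$, reduce $(P_M \ssquare r)(c)$ to a standard Fenchel--Rockafellar duality problem involving $\Phi_M$, and then unwind the algebra using \eqref{not9}. First I would split off the linear and quadratic parts of $r$: writing $r = q + \half\|\cdot\|^2$ and expanding $q(c-b) = q(c) - \bra{b}{Lc} + q(b)$ via \eqref{not1} together with $\bra{c}{Lb} = \bra{b}{Lc}$, we obtain
$$(P_M \ssquare r)(c) = q(c) + \inf_{b \in B}\bigl[\Phi_M(b) - \bra{b}{Lc} + j_c(b)\bigr],$$
where $\Phi_M \in \PCLSC(B)$ by \Lem{Flem}(a) and $j_c(b) = \half\|c - b\|^2$ is finite, convex and continuous on all of $B$.

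The second step is to apply Fenchel--Rockafellar duality to this inner infimum with $f := \Phi_M - \bra{\cdot}{Lc}$. Because $j_c$ is finite and continuous on all of $B$, strong duality holds \emph{and} the dual minimum is attained:
$$\inf_{b \in B}[f(b) + j_c(b)] = -\min_{a^* \in B^*}\bigl[f^*(-a^*) + j_c^*(a^*)\bigr].$$
A shift computation yields $f^*(-a^*) = \Phi_M^*(Lc - a^*)$, which by \Lem{Flem}(b) equals $F_M(Lc - a^*) + \qt(Lc - a^*)$, while \Lem{JClem} gives $j_c^*(a^*) = \half\|a^*\|^2 + \bra{c}{a^*}$.

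The third step is to substitute $b^* := Lc - a^*$ and collapse the algebra. The two half-norm contributions merge into $\half\|Lc - b^*\|^2$, which together with $\qt(Lc - b^*)$ assembles $\rt(Lc - b^*)$; the cross-term $\bra{c}{a^*} = 2q(c) - \bra{c}{b^*}$ combines, via \eqref{not9} applied at $c$, with the outer $+q(c)$ from the first step, and everything cancels down to
$$(P_M \ssquare r)(c) = -\min_{b^* \in B^*}\bigl[F_M(b^*) + \rt(Lc - b^*)\bigr] = -(F_M \squaree \rt)(Lc).$$
The main obstacle is the \emph{exactness} assertion (the $\squaree$ rather than $\ssquare$ on the right side); this is precisely what the continuity of $j_c$ on all of $B$ buys us through Fenchel--Rockafellar, so no extra work beyond the regularity check is required.
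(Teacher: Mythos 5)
Your proposal is correct and follows essentially the same route as the paper: reduce $(P_M\ssquare r)(c)$ to $q(c)$ plus a Fenchel problem involving $\Phi_M$ and $j_c$, invoke Rockafellar's Fenchel duality theorem (with dual attainment coming from the continuity of $j_c$, which is exactly where the $\squaree$ is earned), and then use \Lem{JClem}, \Lem{Flem}(b) and \eqref{not9} to collapse the dual expression to $-\min_{b^*}\big[F_M(b^*)+\rt(Lc-b^*)\big]$. The only cosmetic difference is that you fold the linear term $-\bra{\cdot}{Lc}$ into $f$ and dualize $\inf(f+j_c)$ directly, whereas the paper writes the same step as $q(c)-(\Phi_M+j_c)^*(Lc)=q(c)-({\Phi_M}^*\squaree {j_c}^*)(Lc)$.
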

\begin{proof}Let $c \in B$.  From \Def{PHIdef}, \eqref{rdef} and \eqref{not8},
\begin{align*}
(P_M \ssquare r)(c) &= \infn_{b \in B}\big[\Phi_M(b) - q(b) + q(c - b) + \half\|c - b\|^2\big]\\
&= \infn_{b \in B}\big[\Phi_M(b) + q(c) - \bra{b}{Lc} + {j_c}(b)\big]\\
&= q(c) - \supn_{b \in B}\big[\bra{b}{Lc} - (\Phi_M + {j_c})(b)\big]\\
&= q(c) - (\Phi_M + {j_c})^*(Lc).
\end{align*}
Using Rockafellar's version of the Fenchel duality theorem \big(Rockafellar,\break \cite[Theorem~3(a), p.\ 85]{FENCHEL}, Z\u alinescu, \cite[Theorem~2.8.7(iii), p.\ 127]{ZBOOK}, or\break \cite[Corollary~10.3, p.\ 52]{HBM}\big), \Lem{JClem} and the fact that $\bra{c}{Lc} = 2q(c)$,
\begin{align*}
(P_M &\ssquare r)(c) = q(c) - ({\Phi_M}^* \squaree {j_c}^*)(Lc)\\
&= q(c) - \minn_{b^* \in B^*}\big[{\Phi_M}^*(b^*) + {j_c}^*(Lc - b^*)\big]\\
&= q(c) - \minn_{b^* \in B^*}\big[{\Phi_M}^*(b^*) + \half\|Lc - b^*\|^2 + \bra{c}{Lc - b^*}\big]\\
&=  - \minn_{b^* \in B^*}\big[{\Phi_M}^*(b^*)  + \half\|Lc - b^*\|^2 + q(c) -\bra{c}{b^*}\big].
\end{align*}
From \eqref{not9}, $q(c) - \bra{c}{b^*} = \qt(Lc - b^*) - \qt(b^*)$.   
Hence, from \eqref{rtdef},
\begin{align*}
(P_M \ssquare r)(c) &=  - \minn_{b^* \in B^*}\big[{\Phi_M}^*(b^*)  + \half\|Lc - b^*\|^2 + \qt(Lc - b^*) - \qt(b^*)\big]\\
&=  - \minn_{b^* \in B^*}\big[{\Phi_M}^*(b^*) - \qt(b^*)  + \rt(Lc - b^*)\big].
\end{align*}
Consequently, from \Lem{Flem}(b),
\begin{align*}
(P_M \ssquare r)(c) &=  - \minn_{b^* \in B^*}\big[F_M(b^*) + \rt(Lc - b^*)\big]  =  - (F_M \squaree \rt)(Lc).
\end{align*}
This completes the proof of \Thm{Signsthm}.
\end{proof}
\Cor{Signscor}(b) below will be used in \Thm{AFRLthm} and \Cor{NISURcor}, a generalization of Rockafellar's surjectivity theorem to nonreflexive spaces.   It will also be used in \Thm{EQUIVthm}.
%\Cor{Signscor}
\begin{corollary}\label{Signscor}
Let $F_M \ge 0$ on $B^*$.   Then:
\par
\noindent
{\rm(a)}\enspace $P_M \ssquare r = 0\ \on\ B$.
\par
\noindent
{\rm(b)}\enspace $(F_M \squaree \rt)\circ L = 0\ \on\ B$.
\par
\noindent
{\rm(c)}\enspace The set $M$ is \cmqd.%\eqref{MGF1}
\end{corollary}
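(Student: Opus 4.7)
The plan is to derive everything from the exact identity established in \Thm{Signsthm}, namely $P_M \ssquare r = -(F_M \squaree \rt)\circ L$ on $B$, together with the hypothesis $F_M \ge 0$ and the pointwise nonnegativity of the ambient quadratic data.

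For (a), I would first observe that, since $F_M \ge 0$ on $B^*$ by hypothesis and $\rt \ge 0$ on $B^*$ by \eqref{rtdef}, every term in the infimum defining $(F_M \squaree \rt)(b^*)$ is nonnegative, hence $F_M \squaree \rt \ge 0$ on $B^*$. Composing with $L$ and applying \Thm{Signsthm} gives $P_M \ssquare r \le 0$ on $B$. For the reverse inequality, note that $P_M \ge 0$ on $B$ by \eqref{P} and $r \ge 0$ on $B$ by \eqref{rpos}, so each term in the infimum defining $(P_M \ssquare r)(c)$ is nonnegative, whence $P_M \ssquare r \ge 0$ on $B$. The two inequalities together yield (a).

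For (b), I would simply combine (a) with \Thm{Signsthm}: since $P_M \ssquare r = 0$ on $B$, we get $(F_M \squaree \rt)\circ L = 0$ on $B$. The fact that this value is attained (rather than merely an infimum) is already built into the exact form $\squaree$ asserted by \Thm{Signsthm}.

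For (c), the hypothesis \eqref{SUFF1} of \Thm{SUFFthm} is exactly ``$P_M \ssquare r \le 0$ on $B$'', which follows immediately from (a). Thus \Thm{SUFFthm}(b) gives that $M$ is \cmqd.

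Given the machinery built up in \Thm{Signsthm} and \Thm{SUFFthm}, no step presents a serious obstacle; the only thing to be careful about is distinguishing the ``exact'' inf--convolution $\squaree$ appearing on the right-hand side of \Thm{Signsthm} from the ordinary $\ssquare$, so that the attainment in (b) is correctly inherited from the attainment already supplied by \Thm{Signsthm}.
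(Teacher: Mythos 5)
Your proposal is correct and follows essentially the same route as the paper: nonnegativity of $P_M$, $r$, $F_M$ and $\rt$ gives $P_M \ssquare r \ge 0$ and $(F_M \squaree \rt)\circ L \ge 0$, and Theorem~\ref{Signsthm} then forces both to vanish, with (c) coming from Theorem~\ref{SUFFthm}(b). The only (harmless) difference is organizational: you derive (b) from (a) after the fact, whereas the paper obtains (a) and (b) simultaneously from the two inequalities and the exact identity.
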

\begin{proof}
 From \eqref{P} and \eqref{rpos}, $P_M \ge 0$ on $B$ and $r \ge 0$ on $B$, and so $P_M \ssquare r \ge 0$ on $B$.  By hypothesis, $F_M \ge 0$ on $B^*$ and, from \eqref{rtdef}, $\rt \ge 0$ on $B^*$, consequently $(F_M \squaree \rt)\circ L \ge 0$ on $B$.   (a) and (b) now follow from \Thm{Signsthm}, and (c) is immediate from (a) and \Thm{SUFFthm}(b).
\end{proof}
%:\Rem{PHrem} [Fitzpatrick functions]
\begin{remark}\label{PHrem}
In more conventional notation, $\Phi_M$ is given by:
%\eqref{PHS2}
\begin{equation*}%\label{PHS2}
\Phi_M(x,x^*) := \supn_{(s,s^*) \in M}\big[\bra{s}{x^*} + \bra{x}{s^*} - \bra{s}{s^*}\big]\quad\big((x,x^*) \in E \times E^*\big).
\end{equation*}
$\Phi_M$ is the {\em Fitzpatrick function} of $M$, introduced in \cite{FITZ}.   See \cite[Section 23, pp. 99-103]{HBM} for a history of $\Phi_M$.   
\end{remark}
%:\Sec{Gsec}
\section{The function $G_M\colon B^* \to\ ]-\infty,\infty]$}\label{Gsec}
%\Def{Gdef}
\begin{definition}\label{Gdef}
Let $M$ be a maximally monotone subset of $B$.  If $b^* \in B^*$, we define
%\eqref{G}
\begin{equation}\label{G}
G_M(b^*) := -\inf\qt(L(M) - b^*) \in \rbar.
\end{equation}
\end{definition}
In \Lem{Glem}, we discuss two elementary properties of the function $G_M$.  \Lem{Glem}(a) will be used in \Thm{NIrefl} and \Thm{EQthm}\break\big((d)$\lr$(e)\big); \Lem{Glem}(b) will be used in \Lem{GFlem}, \Thm{AFRLthm} and\break \Thm{EQUIVthm}\big((b)$\lr$(c)\big).  
%\Lem{Glem}
\begin{lemma}\label{Glem}
{\rm(a)}\enspace     $G_M\circ L = P_M$ on $B$.
\par\noindent
{\rm(b)}\enspace $G_M \le F_M$ on $B^*$.
\end{lemma}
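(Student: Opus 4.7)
For part (a), the plan is a one-line unwinding of definitions. Fix $b \in B$ and compute
\[
G_M(Lb) = -\infn_{m \in M}\qt(Lm - Lb) = -\infn_{m \in M}\qt\big(L(m-b)\big) = -\infn_{m \in M}q(m - b) = P_M(b),
\]
where the linearity of $L$ handles the middle step, the identity $\qt \circ L = q$ from \eqref{not7} handles the next, and \eqref{P} gives the final equality. So the only ingredients needed are the definitions of $G_M$, $P_M$, and $L$, together with \eqref{not7}.

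For part (b), the idea is to restrict the infimum in the definition \eqref{DEL} of $F_M$ to points lying in $M$, where $P_M$ vanishes. Explicitly, for any $b^* \in B^*$,
\[
F_M(b^*) = \supn_{b \in B}\big[-P_M(b) - \qt(b^* - Lb)\big] \ge \supn_{m \in M}\big[-P_M(m) - \qt(b^* - Lm)\big].
\]
By \eqref{P2}, $P_M = 0$ on $M$, so the right-hand side equals $-\infn_{m \in M}\qt(b^* - Lm)$. Finally, since $\qt$ is a quadratic form and in particular even (from $\qt(y^*,y\dbs) = \bra{y^*}{y\dbs}$), we have $\qt(b^* - Lm) = \qt(Lm - b^*)$, so this last quantity is exactly $G_M(b^*)$ by \eqref{G}.

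There is no serious obstacle in either part: (a) is pure definition-chasing via \eqref{not7}, and (b) is the simple observation that restricting an infimum can only increase $-\inf$, combined with the fact (already noted in \eqref{P2}) that $P_M$ vanishes on $M$. The only minor care point is verifying the evenness $\qt(-d^*) = \qt(d^*)$, which is immediate from the bilinear-pairing definition of $\qt$.
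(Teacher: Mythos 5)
Your proposal is correct and follows essentially the same route as the paper's proof: part (a) is the identical unwinding via $\qt \circ L = q$ from \eqref{not7}, and part (b) is the same restriction of the infimum in \eqref{DEL} to $M$, where $P_M$ vanishes by \eqref{P2}. The only cosmetic difference is that you make explicit the evenness $\qt(b^* - Lm) = \qt(Lm - b^*)$, which the paper uses silently.
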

\begin{proof}
Let $b \in B$. From \eqref{G}, \eqref{not7} and \eqref{P}
$$-G_M(Lb) = \inf \qt\big(L(M) - Lb\big) = \inf q(M - b) = -P_M(b),$$
which gives (a).   Now let $b^* \in B^*$.   From \eqref{DEL}, \eqref{P2} and \eqref{G},
\begin{align*}
-F_M(b^*) &= \infn_{b \in B}\big[P_M(b) + \qt(b^* - Lb)\big] \le \infn_{m \in M}\big[P_M(m) + \qt(b^* - Lm)\big]\\
& =  \infn_{m \in M}\big[0 + \qt(b^* - Lm)\big] =  \infn_{m \in M}\qt(b^* - Lm) = -G_M(b^*),
\end{align*}
which gives (b).
\end{proof}
%\Def{MGdef}
\begin{definition}\label{MGdef}
The {\em Gossez Extension of $M$} is defined as the set, $M^\sharp$, consisting of all those elements of $B^*$ that are monotonically related to the canonical image of $M$ in $B^*$, that is to say $\big\{b^* \in B^*\colon\ \all\ m \in M,\ \qt(b^* - Lm) \ge 0\big\}$.     See Phelps, \cite[Definition 3.1, pp.\ 216-217]{PRAGUE}.  From \eqref{G},
%\eqref{MG}
\begin{equation}\label{MG}
M^\sharp = \big\{b^* \in B^*\colon\ G_M(b^*) \le 0\big\}.
\end{equation}
We use the notation $M^\sharp$ because the more usual $\overline{M}$ has other interpretations.
\end{definition}
%\Lem{LMMFlem}
\begin{lemma}\label{LMMFlem}
We have
%\eqref{AFMAX1}
\begin{equation}\label{AFMAX1}
M = L^{-1}M^\sharp\ \big(\hbox{from which }L(M) \subset M^\sharp\big).
\end{equation}
\end{lemma}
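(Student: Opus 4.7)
The plan is that this lemma is essentially a bookkeeping consequence of three earlier facts: the characterization \eqref{MG} of $M^\sharp$ as the zero--sublevel set of $G_M$, the identity $G_M\circ L = P_M$ from \Lem{Glem}(a), and the identity $M = {P_M}^{-1}\bigl(\,]-\infty,0]\bigr)$ from \eqref{P2}. So I would simply chain them together, unwrapping the definition of $L^{-1}M^\sharp$.

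Explicitly, I would argue: for any $b\in B$, the conditions $b\in L^{-1}M^\sharp$, $Lb\in M^\sharp$, $G_M(Lb)\le 0$, $P_M(b)\le 0$, and $b\in M$ are all equivalent (the successive equivalences coming from the definition of preimage, \eqref{MG}, \Lem{Glem}(a), and \eqref{P2} respectively). This proves the equality $M = L^{-1}M^\sharp$.

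For the parenthetical inclusion $L(M)\subset M^\sharp$: applying $L$ to both sides of $M = L^{-1}M^\sharp$ yields $L(M) = L\bigl(L^{-1}M^\sharp\bigr)\subset M^\sharp$, where the final inclusion is the elementary set-theoretic fact $f(f^{-1}(S))\subset S$ (valid for any function).

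I do not anticipate any real obstacle here: every ingredient is already in place, the argument is a pure definition-chase, and no analytic or convexity input is required beyond what \Lem{Glem}(a) and \eqref{P2} already deliver. The only thing to be mildly careful about is to cite \eqref{MG} rather than restating the monotone-relation definition of $M^\sharp$, so that the proof reads as a clean concatenation of prior results.
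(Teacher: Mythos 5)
Your argument is correct and is essentially the paper's own proof: both chain \eqref{P2}, \Lem{Glem}(a) and \eqref{MG} to get $M = {P_M}^{-1}\big(\,]-\infty,0]\big) = L^{-1}{G_M}^{-1}\big(\,]-\infty,0]\big) = L^{-1}M^\sharp$, with the inclusion $L(M)\subset M^\sharp$ following set-theoretically. No differences worth noting.
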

\begin{proof}
Let $b \in B$.   It follows from \eqref{not7}, \eqref{P} and \eqref{P2} that $Lb \in M^\sharp \iff \inf_{m \in M}\qt(Lb - Lm) \ge 0 \iff \inf_{m \in M} q(b - m) \ge 0 \iff P_M(b) \le 0 \iff\break b \in M$.  This completes the proof of \Lem{LMMFlem}. 
\end{proof}

%:\Sec{NIsec}
\section{Type (NI)}\label{NIsec}
%
%\Def{NIdef}
\begin{definition}\label{NIdef}
Let $M$ be a maximally monotone subset of $B$.   We say that $M$ is of {\em of type (NI)} if
%\eqref{NIdef2}
\begin{equation}\label{NIdef2}
G_M \ge 0\ \on\ B^*.
\end{equation}
In more conventional notation, 
%\eqref{NIdef1}
\begin{equation}\label{NIdef1}
\all\ (y^*,y\dbs) \in E^* \times E\dbs,\quad\infn_{(x,x^*) \in M}\bra{x^* - y^*}{\wh x - y\dbs} \le 0.
\end{equation} 
See \cite[Definition 10, p.\ 183]{RANGE}. ``(NI)'' stands for ``negative infimum''.
\end{definition}
%\ref{FOLKLORE}
\begin{fact}\label{FOLKLORE}
Every monotone linear map with full domain from a Banach space into its dual is maximally monotone.   See Phelps--Simons, \cite[Corollary 2.6, p.\ 306]{PS}.   We do not know the original source of this result.
\end{fact}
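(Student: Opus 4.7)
The plan is to unwind the definition of ``monotonically related'' in terms of $T$ and then use the linearity of $T$ to reduce maximality to a one-dimensional quadratic argument, avoiding any appeal to continuity of $T$.

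First I would set $M := G(T) = \{(x,Tx)\colon\ x \in E\}$, which is monotone because $T$ is monotone. To prove maximality, suppose $(y,y^*) \in B$ is such that $M \cup \{(y,y^*)\}$ is monotone. Written out, this means
\begin{equation*}
\all\ x \in E,\quad \bra{x - y}{Tx - y^*} \ge 0.
\end{equation*}
My aim is to show $y^* = Ty$, so that $(y,y^*) \in M$.

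The key step is to substitute $x = y + tz$ for arbitrary $z \in E$ and $t \in \RR$. Using the full domain hypothesis and linearity of $T$, $Tx = Ty + tTz$, so the inequality becomes
\begin{equation*}
t\bra{z}{Ty - y^*} + t^2\bra{z}{Tz} \ge 0\qquad(t \in \RR,\ z \in E).
\end{equation*}
This is a real quadratic in $t$ that is nonnegative on all of $\RR$; since its constant term is $0$, the linear coefficient must vanish, giving $\bra{z}{Ty - y^*} = 0$. (If $\bra{z}{Tz} = 0$ the quadratic degenerates to a linear function that must also be identically $0$; if $\bra{z}{Tz} > 0$, nonnegativity on both sides of $0$ forces the linear coefficient to be $0$.) As $z \in E$ was arbitrary, $Ty - y^* = 0$ in $E^*$, i.e., $(y,y^*) = (y,Ty) \in M$.

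I do not expect any serious obstacle: the only delicate point is that one might be tempted to invoke the automatic continuity of a monotone linear map with full domain, but the argument above shows that continuity is not needed for the maximality conclusion itself. Everything else is algebraic and uses only the hypothesis that $\dom T = E$ together with the linearity of $T$.
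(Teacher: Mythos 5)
Your proof is correct and complete. Note, though, that the paper does not prove this Fact at all: it is stated as folklore with a citation to Phelps--Simons \cite[Corollary 2.6, p.\ 306]{PS}, so there is no internal argument to compare against. What you give is the standard quadratic-perturbation proof that underlies the cited result: from $\bra{x-y}{Tx-y^*}\ge 0$ for all $x$, the substitution $x=y+tz$ (legitimate precisely because $\dom T = E$ and $T$ is linear) yields $t\bra{z}{Ty-y^*}+t^2\bra{z}{Tz}\ge 0$ for all $t\in\RR$, and forcing the linear coefficient to vanish gives $y^*=Ty$. Two small remarks: your case split implicitly uses $\bra{z}{Tz}\ge 0$, which does follow from monotonicity of $T$ together with $T0=0$, but it is worth saying so; and the split is unnecessary anyway, since $at+bt^2\ge 0$ for all $t$ already forces $a=0$ regardless of the sign of $b$ (divide by $t$ and let $t\to 0^{\pm}$). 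You are also right that automatic continuity of $T$ (which holds by a closed-graph/Baire argument and is part of what Phelps--Simons prove) is not needed for maximality; your argument is purely algebraic and in that respect is more elementary than the cited source.
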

%:   \Ex{TAILNIex}
\begin{example}[The tail operator, see \Rem{TAILrem}]\label{TAILNIex}
Let $E = \ell_1$, and define $T\colon\ \ell_1 \mapsto \ell_\infty = E^*$ by $(Tx)_n = \sum_{k \ge n} x_k$.   Then {\em $T$ is maximally monotone but not of type (NI).}    

\end{example}
\begin{proof}
The maximal monotonicity of $T$ follows from Fact \ref{FOLKLORE}.   We now let $y^* := (1,1,\dots) \in {\ell_1}^* = \ell_\infty$ and $y\dbs \in {\ell_\infty}^*$ be a Banach limit.   Let $x \in \ell_1$, and write $\sigma = \bra{x}{y^*} = \sum_{n \ge 1}x_n$.     Now 
\begin{align*}
\bra{x}{Tx} &= \ts\sum_{n \ge 1}x_n\sum_{k \ge n}x_k = \sum_{n \ge 1}x_n^2 + \sum_{n \ge 1}\sum_{k > n}x_nx_k\\
&\ge \ts\half\sum_{n \ge 1}x_n^2 + \sum_{n \ge 1}\sum_{k > n}x_nx_k = \half\sigma^2.
\end{align*}
We also have $\bra{Tx}{y\dbs} = \lim_{n \to \infty}(Tx)_n = 0$ and $\bra{y^*}{y\dbs} = \lim_{n \to \infty}1 = 1$.   Thus, by addition, $\bra{Tx - y^*}{\wh x - y\dbs} = \bra{x}{Tx} - \bra{x}{y^*} - \bra{Tx}{y\dbs} + \bra{y^*}{y\dbs}
\ge \half\sigma^2 - \sigma + 1 = \half(\sigma - 1)^2 + \half \ge \half$.   So\quad $\infn_{(x,x^*) \in G(T)}\bra{x^* - y^*}{\wh x - y\dbs} \ge \half$, and consequently \eqref{NIdef1} implies that $T$ is not of type (NI). 
\end{proof}
\Lem{GFlem} will be used in \Thm{AFMAXthm},  \Thm{EQthm}\big((b)$\lr$(c)\big) and\break \Thm{EQthm}\big((f)$\lr$(g)\big).
%\Lem{GFlem}
\begin{lemma}\label{GFlem}
Let $M$ be of type (NI), $d^*,e^* \in B^*$, $\lambda,\mu > 0$ and  $\lambda + \mu = 1$.    Then
%\eqref{G1}
\begin{equation}\label{G1}
\lambda G_M(d^*) + \mu G_M(e^*) + \lambda\mu\qt(e^* - d^*) \ge 0,
\end{equation}
%\eqref{F1}
\begin{equation}\label{F1}
\lambda F_M(d^*) + \mu F_M(e^*) + \lambda\mu\qt(e^* - d^*) \ge 0,
\end{equation}
%\eqref{FG2}
\begin{equation}\label{FG2}
{G_M}(e^*) = 0 \qlr (G_M \ssquare \qt)(e^*) = 0
\end{equation}
and
%\eqref{FG3}
\begin{equation}\label{FG3}
F_M(e^*) = 0 \qlr {\Phi_M}\dbs\big(\Lt e^*\big) = \qt(e^*).
\end{equation}        
\end{lemma}
\begin{proof}
Let $m \in M$.   From \eqref{not4} and \eqref{G},
\begin{align*}
\qt(Lm - \lambda d^* - \mu e^*) + \lambda\mu\qt(e^* - d^*) &= \lambda\qt(Lm - d^*) + \mu\qt(Lm - e^*)\\
&\ge -\lambda G_M(d^*) - \mu G_M(e^*).
\end{align*}
Thus, taking the infimum over $m$,
\begin{align*}
-G_M(\lambda d^* + \mu e^*) + \lambda\mu\qt(e^* - d^*) \ge -\lambda G_M(d^*) - \mu G_M(e^*).
\end{align*}
\eqref{G1} follows since \eqref{NIdef2} implies that $G_M(\lambda d^* + \mu e^*) \ge 0$, and \eqref{F1} follows from \eqref{G1} and \Lem{Glem}(b).
If ${G_M}(e^*) = 0$ then, from \eqref{G1},  for all $d^* \in B^*$ and  $\lambda,\mu > 0$ such that $\lambda + \mu = 1$,\quad $\lambda G_M(d^*) + \lambda\mu\qt(e^* - d^*) \ge 0$\quad from which\quad $G_M(d^*) + \mu\qt(e^* - d^*) \ge 0$.\quad Letting $\mu \to 1$,\quad $G_M(d^*) + \qt(e^* - d^*) \ge 0$.\quad   Taking the infimum over $d^* \in B^*$,\quad $(G_M \ssquare \qt)(e^*) \ge 0$.\quad  On the other hand, $(G_M \ssquare \qt)(e^*) \le G_M(e^*) + \qt(e^* - e^*) = 0$ and so $(G_M \ssquare \qt)(e^*) = 0$.   This\break completes the proof of \eqref{FG2}.   If $F_M(e^*) = 0$ then, arguing as above and using  \eqref{F1} instead of \eqref{G1}, $\big(F_M \ssquare \qt\big)(e^*) = 0$, and so \Lem{Flem}(c) implies that ${\Phi_M}\dbs\big(\Lt e^*\big) = \qt(e^*)$, which completes the proof of \eqref{FG3}.    
\end{proof}
%\Def{DUALMONdef}
\begin{definition}\label{DUALMONdef}
Let $\emptyset \ne N \subset B^*$.  We say that $N$ is {\em monotone} if, for all $d^*,e^* \in N$, $\qt(d^* - e^*) \ge 0$.
\end{definition}
%:   \Thm{AFMAXthm}
\begin{theorem}\label{AFMAXthm}
Let $M$ be of type (NI).  Then $M^\sharp$ is a maximally monotone subset of $B^*$.
\end{theorem}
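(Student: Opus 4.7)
The plan has two parts: show that $M^\sharp$ is monotone in the sense of \Def{DUALMONdef}, and then show it is maximal among such monotone subsets of $B^*$.

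\emph{Monotonicity.} Under the type (NI) hypothesis, $G_M \ge 0$ on $B^*$, so by \eqref{MG} we have $M^\sharp = {G_M}^{-1}\{0\}$. Given $d^*, e^* \in M^\sharp$, the plan is to invoke \eqref{G1} of \Lem{GFlem} with $\lambda = \mu = \half$; since $G_M(d^*) = G_M(e^*) = 0$, this collapses to $\tfrac14\qt(e^* - d^*) \ge 0$. Because $\qt$ is a quadratic form and therefore $\qt(e^* - d^*) = \qt(d^* - e^*)$, this yields the monotonicity condition in \Def{DUALMONdef}.

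\emph{Maximality.} Let $b^* \in B^*$ be such that $M^\sharp \cup \{b^*\}$ is monotone; I want to conclude $b^* \in M^\sharp$. The key observation is that by \Lem{LMMFlem} we have $L(M) \subset M^\sharp$, so in particular for every $m \in M$ the element $Lm$ lies in $M^\sharp$ and the assumed monotone relationship gives $\qt(b^* - Lm) \ge 0$. But this is precisely the defining condition of $M^\sharp$ given in \Def{MGdef}, so $b^* \in M^\sharp$ as required.

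\emph{Main obstacle.} There is essentially no obstacle once \Lem{GFlem} and \Lem{LMMFlem} are in hand — the monotonicity is the $\lambda = \mu = \half$ specialization of an inequality that has already been established, and the maximality is almost a tautology from the definition of $M^\sharp$ combined with the inclusion $L(M) \subset M^\sharp$. The only care needed is to note that $M^\sharp$ is defined by testing against $L(M) \subset B^*$, which is a \emph{smaller} set than $M^\sharp$, so the maximality test against all of $M^\sharp$ automatically implies the defining test and there is nothing further to verify.
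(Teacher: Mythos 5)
Your proposal is correct and follows essentially the same route as the paper: monotonicity of $M^\sharp$ via the $\lambda=\mu=\half$ case of \eqref{G1} from \Lem{GFlem} (the paper uses $G_M\le 0$ on $M^\sharp$ from \eqref{MG} rather than the equality $G_M=0$, but this is the same argument), and maximality via the inclusion $L(M)\subset M^\sharp$ from \Lem{LMMFlem} together with the definition \eqref{MG} of $M^\sharp$. No gaps.
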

\begin{proof}Let $d^*,e^* \in M^\sharp$.   From \eqref{MG}, $G_M(d^*),G_M(e^*) \le 0$.   Setting $\lambda = \mu = \half$ in \eqref{G1}, $\qt(e^* - d^*) \ge 0$, and so $M^\sharp$ is a monotone subset of $B^*$.  It remains to prove that $M^\sharp$ is {\em maximally} monotone.   So we suppose that   
\begin{equation*}
b^* \in B^*\hbox{ and, } \all\ a^* \in M^\sharp,\ \qt\big(a^* - b^*\big) \ge 0
\end{equation*}
and we will establish that $b^* \in M^\sharp$.  From  \eqref{AFMAX1}, for all $m \in M$, $\qt\big(Lm - b^*\big) \ge 0$. Thus, from \eqref{G}, $G_M(b^*) \le 0$ and, from \eqref{MG}, $b^* \in M^\sharp$.   This completes the proof of \Thm{AFMAXthm}.
\end{proof}
%\Cor{SURlem}
\begin{corollary}\label{SURlem}
Suppose that,  for all $x^* \in E^*$, there exists $x \in E$ such that $(x,x^*) \in M$.   {\em ($M$ is the graph of a {\em surjective} multifunction from $E$ onto $E^*$.)}   Then $M$ is of type (NI).   If, further, $L(M)$ is a maximally monotone subset of $B^*$ then $L(M) = M^\sharp$.
\end{corollary}
\begin{proof}
Let $b^* = (x^*,x\dbs)$ be an arbitrary element of $B^*$.   By hypothesis,  there exists $x \in E$ such that $m = (x,x^*) \in M$. 
Then \eqref{G} implies that   
\begin{align*}
G_M(b^*) &\ge - \qt(Lm - b^*) = - \qt((x^*,\wh x) - (x^*,x\dbs)) = - \qt(0,\wh x - x\dbs) = 0,
\end{align*}   
and thus it follows from \eqref{NIdef2} that $M$ is of type (NI).  \big(This argument is taken from \cite[Corollary 7.9, p.\ 1034]{PARTONE}.\big)   From \eqref{AFMAX1} and \Thm{AFMAXthm}, $L(M) \subset M^\sharp$ and  $M^\sharp$ is (maximally) monotone.   Consequently, if $L(M)$ is {\em maximally} monotone then $L(M) = M^\sharp$.
\end{proof}
%:   \Ex{TAILGex}
\begin{example}\label{TAILGex}
Define $U\colon\ \ell_1 \to c_0$ by $(Ux)_n = \sum_{k \ge n} x_k$.   $U^{-1}$ is, of course, the subset $\big\{(Ux,x)\big\}_{x \in \ell_1}$ of  $B = c_0 \times \ell_1$.  Then $U^{-1}$ is maximally monotone of type (NI), but $\big({U^{-1}}\big)^\sharp = G(T)$, where $T$ is the {\em tail operator} of \Rem{TAILrem}.   From\break \Ex{TAILNIex}, ${\big({U^{-1}}\big)}^\sharp$ is a maximally monotone subset of $B^* = \ell_1 \times \ell_\infty$, but not of type (NI).
\end{example}
\begin{proof}
  Clearly $L(U^{-1}) = G(T)$.  From Fact \ref{FOLKLORE}, $L(U^{-1})$ is a maximally monotone subset of $B^* = \ell_1 \times \ell_\infty$. It follows easily from \eqref{not7} that $U^{-1}$ is a monotone subset of $c_0 \times \ell_1$.   We now prove that $U^{-1}$ is a maximally monotone subset of $c_0 \times \ell_1$, so that we can apply \Cor{SURlem}.   To this end, let $b \in c_0 \times \ell_1$ and $\inf q(U^{-1} - b) \ge 0$.   From \eqref{not7}, $\inf \qt\big(L(U^{-1}) - Lb\big) \ge 0$.   Since $L(U^{-1})$ is maximally monotone, $Lb \in L(U^{-1})$, from which $b \in U^{-1}$.   This completes the proof of the maximal monotonicity of $U^{-1}$.   \Cor{SURlem} implies that $U^{-1}$ is of type (NI) and ${\big({U^{-1}}\big)}^\sharp = L(U^{-1}) = G(T)$.   From \Ex{TAILNIex}, ${\big({U^{-1}}\big)}^\sharp$ is not of type (NI).
\end{proof}
%:   \Thm{AFRLthm}
\begin{theorem}[Fundamental property of type (NI) sets]\label{AFRLthm}
Let $M$ be of type (NI) and $c \in B$.  Then there exists $b^* \in M^\sharp$ such that $\rt(Lc - b^*) = 0$.
\end{theorem}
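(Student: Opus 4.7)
The plan is to deduce this quickly from \Cor{Signscor} applied to the exact inf-convolution formula of \Thm{Signsthm}, using the fact that ``type (NI)'' passes from $G_M$ to $F_M$ via \Lem{Glem}(b).

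First, since $M$ is of type (NI), \eqref{NIdef2} gives $G_M \ge 0$ on $B^*$, and \Lem{Glem}(b) then forces $F_M \ge G_M \ge 0$ on $B^*$.  Hence the hypothesis of \Cor{Signscor} is satisfied, and \Cor{Signscor}(b) asserts
\begin{equation*}
(F_M \squaree \rt)(Lc) = 0.
\end{equation*}
The key point is that this is written with $\squaree$, i.e.\ the inf-convolution is \emph{exact}, so the infimum in the definition is attained.  Thus there exists $b^* \in B^*$ such that
\begin{equation*}
F_M(b^*) + \rt(Lc - b^*) = 0.
\end{equation*}

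Since $F_M \ge 0$ on $B^*$ and, by \eqref{rtdef}, $\rt \ge 0$ on $B^*$, each summand must individually vanish.  In particular $\rt(Lc - b^*) = 0$ (which is one half of what we need) and $F_M(b^*) = 0$.

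It remains to check $b^* \in M^\sharp$, that is (by \eqref{MG}) $G_M(b^*) \le 0$.  But we already have $G_M \ge 0$ on $B^*$ from (NI), and $G_M(b^*) \le F_M(b^*) = 0$ by \Lem{Glem}(b).  Sandwiching gives $G_M(b^*) = 0$, so $b^* \in M^\sharp$ and the proof is complete.  No step presents a real obstacle here: the whole argument is a bookkeeping exercise that consolidates the earlier machinery, and the only non-cosmetic ingredient is the \emph{exactness} of the inf-convolution in \Thm{Signsthm}, which is precisely what allows us to extract an honest witness $b^*$ rather than merely an approximating sequence.
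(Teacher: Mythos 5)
Your proposal is correct and follows essentially the same route as the paper's own proof: deduce $F_M \ge 0$ from (NI) via \Lem{Glem}(b), invoke \Cor{Signscor}(b) and the exactness of the inf-convolution from \Thm{Signsthm} to obtain a minimizing $b^*$ with $F_M(b^*) = 0$ and $\rt(Lc - b^*) = 0$, then conclude $b^* \in M^\sharp$ from $G_M(b^*) \le F_M(b^*) = 0$ and \eqref{MG}. No gaps.
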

\begin{proof}
From \eqref{NIdef2} and \Lem{Glem}(b), $F_M \ge 0\ \on\ B^*$ so, from \Cor{Signscor}(b), $(F_M \squaree \rt)(Lc) = 0$, that is to say $\minn_{b^* \in B^*}\big[F_M(b^*) + \rt(Lc - b^*)\big]= 0$.   Since $\rt \ge 0$ on $B^*$ also, there exists $b^* \in B^*$ such that $F_M(b^*) = 0$ and $\rt(Lc - b^*) = 0$.   From \Lem{Glem}(b) again, $G_M(b^*) \le 0$, and the result follows from \eqref{MG}.   
\end{proof}
The result of \Thm{AFRLthm} take on a more familiar aspect if we work in terms of {\em multifunctions} rather than {\em subsets of a product space}.   Write $\J \colon E^* \toto E\dbs$ for the duality map, so $z\dbs \in {\J}z^*$ exactly when $\half\|z^*\|^2 + \half\|z\dbs\|^2 - \bra{z^*}{z\dbs} = 0$.   From \eqref{rtdef}, we have $ z\dbs \in \J z^* \iff \rt(-z^*,z\dbs) = 0 \iff \rt(z^*,-z\dbs) = 0 \iff z^* \in  \J^{-1} z\dbs$.   
%\Cor{NISURcor}
\begin{corollary}[(NI) surjectivity theorems]\label{NISURcor}
Let $S\colon E \toto E^*$ and $G(S)$ be maximally monotone of type (NI).  Define $S^\sharp\colon E^* \toto E\dbs$ so that $G(S^\sharp) = G(S)^\sharp$ in the sense of \eqref{MG}.   Then $S^\sharp$ is maximally monotone.   Now let $x \in E$ and $x^* \in E^*$.   Then there exist $y^* \in E^*$ and $y\dbs \in E\dbs$ such that
%\eqref{NISUR1}
\begin{equation}\label{NISUR1}
\wh x \in {S^\sharp}y^* + {\J}(y^* - x^*)\quand x^* \in (S^\sharp)^{-1}(y\dbs) + \J^{-1}(y\dbs - \wh x).
\end{equation}
Furthermore,
%\eqref{NISUR2}
\begin{equation}\label{NISUR2}
R\big({S^\sharp} + {\J}\big) \supset \wh E \quand R\big((S^\sharp)^{-1} + {\J}^{-1}\big) = E^*.
\end{equation}  
\end{corollary}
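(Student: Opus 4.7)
The plan is to derive everything from \Thm{AFRLthm} applied to $M := G(S)$, together with the equivalences relating $\rt$ and $\J$ recorded immediately before the statement of the corollary.

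The maximal monotonicity of $S^\sharp$ is a direct translation of \Thm{AFMAXthm}: since $G(S)$ is of type (NI) by hypothesis, $G(S)^\sharp = G(S^\sharp)$ is a maximally monotone subset of $B^* = E^* \times E\dbs$, which is exactly what it means for the multifunction $S^\sharp\colon E^* \toto E\dbs$ to be maximally monotone.

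For \eqref{NISUR1}, I would apply \Thm{AFRLthm} to the point $c := (x,x^*) \in B$. Since $Lc = (x^*,\wh x)$, the theorem produces $b^* = (y^*,y\dbs) \in M^\sharp = G(S^\sharp)$---so that $y\dbs \in S^\sharp y^*$ and equivalently $y^* \in (S^\sharp)^{-1}(y\dbs)$---such that
$$\rt(Lc - b^*) = \rt\big(x^* - y^*,\,\wh x - y\dbs\big) = 0.$$
Applying the equivalence $\rt(-z^*, z\dbs) = 0 \iff z\dbs \in \J z^*$ with $z^* = y^* - x^*$ and $z\dbs = \wh x - y\dbs$, and the equivalence $\rt(z^*, -z\dbs) = 0 \iff z^* \in \J^{-1} z\dbs$ with $z^* = x^* - y^*$ and $z\dbs = y\dbs - \wh x$ (both recorded immediately before the corollary), this single identity rewrites as the simultaneous containments
$$\wh x - y\dbs \in \J(y^* - x^*) \quand x^* - y^* \in \J^{-1}(y\dbs - \wh x).$$
Combining the first with $y\dbs \in S^\sharp y^*$ and the second with $y^* \in (S^\sharp)^{-1}(y\dbs)$ yields precisely the two inclusions of \eqref{NISUR1}.

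The assertions in \eqref{NISUR2} then drop out by two specializations. Taking $x^* = 0$ in the first inclusion of \eqref{NISUR1} gives $\wh x \in S^\sharp y^* + \J y^* = (S^\sharp + \J)(y^*)$, so $\wh E \subset R(S^\sharp + \J)$; taking $x = 0$ in the second gives $x^* \in (S^\sharp)^{-1}(y\dbs) + \J^{-1}(y\dbs) \subset R\big((S^\sharp)^{-1} + \J^{-1}\big)$, while the reverse inclusion is automatic since both $(S^\sharp)^{-1}$ and $\J^{-1}$ are valued in $E^*$. The only real obstacle in the argument is sign bookkeeping: unpacking the single ``symmetric'' identity $\rt(Lc - b^*) = 0$ into the two correct asymmetric containments, one expressed through $\J$ and the other through $\J^{-1}$.
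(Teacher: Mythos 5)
Your proposal is correct and follows essentially the same route as the paper: maximal monotonicity of $S^\sharp$ from \Thm{AFMAXthm}, then \Thm{AFRLthm} applied to $c = (x,x^*)$ and the $\rt$--$\J$ equivalences to unpack $\rt(x^* - y^*,\wh x - y\dbs) = 0$ into the two containments of \eqref{NISUR1}, and finally the specializations $x^* = 0$ and $x = 0$ for \eqref{NISUR2}. The sign bookkeeping you carry out is exactly the paper's argument, merely spelled out in more detail.
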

\begin{proof} The maximal monotonicity of $S^\sharp$ is immediate from \Thm{AFMAXthm} with $M := G(S)$.
\par
From \Thm{AFRLthm} with $c = (x,x^*)$, there exist $y^* \in E^*$ and $y\dbs \in S^\sharp y^*$ such that  $\rt(x^* - y^*, \wh x - y\dbs) = 0$.   But then $\wh x - y\dbs \in \J(y^* - x^*)$ and so the first observation in \eqref{NISUR1} follows since $\wh x = y\dbs + (\wh x - y\dbs)$.   We also have $y^* \in (S^\sharp)^{-1}(y\dbs)$ and $x^*  - y^*\in \J^{-1}(y\dbs - \wh x)$ and so the second observation in \eqref{NISUR1} follows since $x^* =  y^* + (x^*  - y^*)$.
\par
\eqref{NISUR2} follows from the two observations in \eqref{NISUR1} by taking (respectively)\break $x^* = 0$ and $x = 0$. 
\end{proof}
We now take a brief digression into the case where $E$ is reflexive.
%\Thm{NIrefl}
\begin{theorem}\label{NIrefl}
Let $E$ be a reflexive space and $M$ be a maximally monotone subset of $B$.   Then $M$ is of type (NI) and  $M^\sharp = L(M)$. 
\end{theorem}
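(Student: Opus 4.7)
The plan is to exploit the observation that when $E$ is reflexive, the canonical embedding $E \to E\dbs$ is surjective, so every $z\dbs \in E\dbs$ has the form $\wh y$ for some $y \in E$. Since $L(y,y^*) = (y^*,\wh y)$, this means $L\colon B \to B^*$ is a bijection (not merely an isometric embedding). Once this is in hand, everything else reduces to pushing facts already proved for $P_M$ on $B$ over to $G_M$ on $B^*$.

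First I would verify type (NI). By \Lem{Glem}(a), $G_M \circ L = P_M$ on $B$, and by \eqref{P} we have $P_M \ge 0$ on $B$. Given any $b^* \in B^*$, reflexivity lets us write $b^* = Lb$ for some $b \in B$, and therefore
\begin{equation*}
G_M(b^*) = G_M(Lb) = P_M(b) \ge 0.
\end{equation*}
Hence $G_M \ge 0$ on $B^*$, which by \eqref{NIdef2} is exactly the definition of type (NI).

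Next I would prove $M^\sharp = L(M)$. The inclusion $L(M) \subset M^\sharp$ is already recorded in \eqref{AFMAX1}. For the reverse inclusion, let $b^* \in M^\sharp$. By \eqref{MG} this means $G_M(b^*) \le 0$. Using surjectivity of $L$ once more, write $b^* = Lb$; then $P_M(b) = G_M(Lb) = G_M(b^*) \le 0$, and \eqref{P2} identifies $M$ with $P_M^{-1}\{0\}$, so $b \in M$ and $b^* = Lb \in L(M)$.

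There is essentially no substantive obstacle: the whole point is that in the reflexive case $L$ is a bijection between $B$ and $B^*$, and the identity $G_M \circ L = P_M$ from \Lem{Glem}(a) then transports the universal nonnegativity of $P_M$ and the zero set characterization of $M$ directly to $B^*$. The only thing to be careful about is noting explicitly where reflexivity is used, namely to get a preimage $b = L^{-1}b^*$ for each $b^* \in B^*$; without reflexivity one only has $L(B) \subset B^*$ and the argument would collapse.
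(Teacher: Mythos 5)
Your proposal is correct and is essentially the paper's own argument: reflexivity makes $L$ surjective, \Lem{Glem}(a) transfers $P_M \ge 0$ to $G_M \ge 0$ for type (NI), and the same transfer together with \eqref{MG}, \eqref{P2} and \eqref{AFMAX1} yields $M^\sharp = L(M)$. No differences worth noting.
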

\begin{proof}Let $b^* \in B^*$. Then there exists $b \in B$ so that $Lb = b^*$, and we obtain from \Lem{Glem}(a) and \eqref{P} that $G_M(b^*) = G_M(Lb) = P_M(b) \ge 0$.   Thus \eqref{NIdef2} is true, and $M$ is of type (NI).   Suppose now that $b^* \in M^\sharp$.   Using the argument above and \eqref{MG}, there exists $b \in B$ so that $Lb = b^*$ and $P_M(b) = G_M(b^*) \le 0$, and so \eqref{P2} implies that $b \in M$.   Thus $M^\sharp \subset L(M)$, and it follows from \eqref{AFMAX1} that $M^\sharp = L(M)$.     
\end{proof}
%\Cor{ROCKSURcor}
\begin{corollary}[Rockafellar's surjectivity theorem for reflexive spaces]\label{ROCKSURcor}
Let $E$ be reflexive, $S\colon\ E \toto E^*$ be maximally monotone and $J\colon\ E \toto E^*$ be the duality map.   Then $R(S + J) = E^*$.
\end{corollary}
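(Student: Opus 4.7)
The plan is to reduce Rockafellar's classical theorem to the (NI) surjectivity statement already established in \Cor{NISURcor}, using \Thm{NIrefl} to supply the ``type (NI)'' hypothesis for free in the reflexive setting, and then to match the abstract objects in \Cor{NISURcor} with their concrete counterparts under the canonical identification $E = E\dbs$.

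First I will apply \Thm{NIrefl} with $M := G(S)$ to conclude that $G(S)$ is maximally monotone of type (NI) and that $G(S)^\sharp = L\big(G(S)\big)$. This legitimizes an appeal to \Cor{NISURcor}, so I know $S^\sharp$ is maximally monotone and that the identities \eqref{NISUR1} and \eqref{NISUR2} hold.

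Next I will translate $S^\sharp$ and $\J$ into the reflexive language. Since the canonical embedding $x\mapsto \wh x$ identifies $E$ with $E\dbs$, the equality $G(S)^\sharp = L\big(G(S)\big) = \big\{(x^*,\wh x)\colon x^* \in Sx\big\}$ becomes precisely $G(S^{-1})$. Hence $S^\sharp = S^{-1}$ and consequently $(S^\sharp)^{-1} = S$. Under the same identification, the map $\J\colon E^* \toto E\dbs$ defined just before \Cor{NISURcor}, namely $z\dbs \in \J z^* \ifff \half\|z^*\|^2 + \half\|z\dbs\|^2 = \bra{z^*}{z\dbs}$, is exactly the inverse of the classical duality map $J\colon E \toto E^*$, since that defining equation forces $\|z^*\| = \|z\dbs\| = \bra{z^*}{z\dbs}^{1/2}$. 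Thus $\J^{-1}$ is identified with $J$.

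Finally I will feed these identifications into the second half of \eqref{NISUR2} to obtain
$$E^* = R\big((S^\sharp)^{-1} + \J^{-1}\big) = R(S + J),$$
which is Rockafellar's surjectivity theorem. No step is a real obstacle; the only point requiring care is the bookkeeping on the canonical identifications $E = E\dbs$ and $\J^{-1} = J$, which is what converts the ``dual-side'' statement \eqref{NISUR2} into the ``primal-side'' surjectivity $R(S+J) = E^*$.
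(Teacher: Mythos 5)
Your proposal is correct and follows essentially the same route as the paper: invoke \Thm{NIrefl} to get type (NI) and $G(S)^\sharp = L(G(S))$, apply the second part of \eqref{NISUR2}, and use reflexivity to identify $(S^\sharp)^{-1}$ with $S$ and $\J^{-1}$ with $J$. The bookkeeping you describe (including the verification that $\J^{-1}(\wh x) = Jx$) is exactly what the paper's proof does.
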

\begin{proof}
From \Thm{NIrefl}, $G(S)$ is of type (NI), and the second observation in \eqref{NISUR2} and the reflexivity of $E$ imply that, for all $x^* \in E^*$, there exists $x \in E$ such that $(S^\sharp)^{-1}(\wh x) + {\J}^{-1}(\wh x) = x^*$.   From \Thm{NIrefl},
$$z^* \in (S^\sharp)^{-1}(\wh x) \iff (z^*,\wh x) \in L(G(S)) \iff (x,z^*) \in G(S) \iff z^* \in Sx.$$
It is also easy to see that ${\J}^{-1}(\wh x) = Jx$, and so $Sx + Jx = x^*$. 
\end{proof}
\begin{remark}
There is a discussion of the proofs of Rockafellar's surjectivity theorem for reflexive spaces in \cite[Remark 29.4, p.\ 118]{HBM}.   See also \Rem{SURrem} for a discussion of the connection between \eqref{NISUR2} and a result of Gossez.  
\end{remark}
%:\Sec{LINsec}
\section{Linear subspaces and their adjoints}\label{LINsec}
In this section, $\CML(B)$ and $\CML(B^*)$ stand for the set of closed monotone linear subspaces of $B$ and $B^*$, and $\MML(B)$ and $\MML(B^*)$ for the set of\break maximally monotone linear subspaces of $B$ and $B^*$.   Let $V \in \CML(B)$.\break  The main result is \Thm{qqthm}, which extends \cite[Theorem 6.4, p.\ 269]{POLAR} and \cite[Corollary 6.6, p.\ 269]{POLAR}. By virtue of \big((a)$\iff$(b)\big) of \Thms{EQUIVthm} and \ref{Dthm}, \Thm{qqthm}\big((c)$\iff$(d)\big) is equivalent to the result first proved in\break \cite[Theorem 4.1, p.\ 4960]{BBWYBB}.
\par
We first investigate the element $g_d$ of $\PCLSC(B)$ defined by the parameters of the problem,
and compute $\partial g_d$ in \Lem{subglem}. \Lems{ALHORlem}--\ref{hlem} are devoted to some convex analysis: \Lem{ALHORlem} uses the Br\o ndsted--Rockafellar theorem, and \Lem{hlem} uses Rockafellar's formula for the subdifferential of a sum.  \Lem{FNLlem}(a,d) are devoted to the automatic maximality of $V$ and $V^\A$ (see below) under certain circumstances.   Finally, \Lem{FNLlem} leads rapidly to\break \Thm{qqthm}.         
\par
The {\em adjoint subspace}, $V^\A$, of $V$ is the closed linear subspace of $B^*$ defined by
%\eqref{OLDADJdef}
\begin{equation}\label{OLDADJdef}
(y^*,y\dbs) \in V^\A \iff \all\ (x,x^*) \in V,\ \bra{x}{y^*} = \bra{x^*}{y\dbs}.  
\end{equation}
Note that $(y^*,y\dbs) \in V^\A \iff (y\dbs,y^*) \in V^*$ in the sense of \cite{BBWYBB}. We define the bilinear map $\xbra{\cdot}{\cdot} \colon\ B \times B^* \to \RR$ by: 
$$\xbra{(x,x^*)}{(y^*,y\dbs)}:=  \bra{x}{y^*} - \bra{x^*}{y\dbs}.$$ 
Clearly,
%\eqref{XADJdef}
\begin{equation}\label{XADJdef}
a^* \in V^\A \iff \Xbra{V}{a^*} = \{0\}.  
\end{equation}
We define the {\em reflection map} $\rho\colon\ B^* \to B^*$ by  $\rho(y^*,y\dbs) := (y^*,-y\dbs)$.
Then
%\eqref{rhoprops}
\begin{equation}\label{rhoprops}
\left.
\begin{gathered}
\qt \circ \rho = -\qt\hbox{ and }\\
 (a,a^*) \in B \times B^* \lr \xbra{a}{a^*} = \bra{a}{\rho a^*}\hbox{ and } \xbra{a}{\rho a^*} = \bra{a}{a^*}.  
\end{gathered}
\right\}
\end{equation}
\par
It is easy to see that if $W$ is a linear subspace of $B$ (resp. $B^*$) then
%\eqref{Wmon}
\begin{equation}\label{Wmon}
W\hbox{ is monotone }\iff q(W) \subset [0,\infty[\enspace\big(\hbox{resp. } \qt(W) \subset [0,\infty[\,\big). 
\end{equation}
We will use the following standard notation:
%\Def{SUBdef}
\begin{definition}\label{SUBdef}
If $f\colon\ B \to \rbar$ and $b \in \dom\,f$ then the {\em subdifferential of $f$ at $b$}, $\partial f(b)$, is the subset of $B^*$ defined by
$$b^* \in \partial f(b) \iff \all\ c \in B,\ \bra{c - b}{b^*} \le f(c) - f(b).$$
\end{definition}   
%\Lem{subglem}
\begin{lemma}\label{subglem}
Let $V \in \CML(B)$, $d \in B$, $g_d:= q + \I_{V - d}$ and $b \in \dom\,g_d = V - d$.   Then $g_d \in \PCLSC(B)$ and
%\eqref{subg1}
\begin{equation}\label{subg1}
b^* \in \partial g_d(b) \iff \bra{V}{Lb - b^*} = \{0\}.
\end{equation}
If, further, $b^* \in \partial g_d(b)$  and  $V^\A \in \CML(B^*)$, then $q(b) + \qt(b^*) \le \bra{b}{b^*}$.
\end{lemma}
\begin{proof}
Let $c \in \dom\,g_d = V - d$, $\lambda, \mu \ge 0$ and $\lambda + \mu = 1$. Then $\lambda b + \mu c \in V - d$, so $g_d(\lambda b + \mu c) = q(\lambda b + \mu c)$.   From \eqref{Wmon}, $q(b - c) \ge 0$  and so, from \eqref{not2},
$$g_d(\lambda b + \mu c) \le q(\lambda b + \mu c) + \lambda\mu q(b - c) = \lambda q(b) + \mu q(c) = \lambda g_d(b) + \mu g_d(c).$$
Consequently, $g_d$ is convex.  From \eqref{qcont}, $q$ is continuous.   Since $V$ is closed, $g_d$ is lower semicontinuous.   Thus $g_d \in \PCLSC(B)$.
\par
We now prove the implication ``$\rl$'' in \eqref{subg1}.  So let $\bra{V}{Lb - b^*} = \{0\}$ and $v := c - b \in (V - d) - (V - d) = V - V = V$.  From \eqref{Wmon},
%\eqref{subg3}
\begin{equation}\label{subg3}
q(v) \ge 0\hbox{\quad and \quad}\bra{v}{Lb - b^*} = \{0\}. 
\end{equation}
From \eqref{subg3}, the fact that $c = v + b$ and \eqref{not1},
%\eqref{subg4}
\begin{equation}\label{subg4}
\bra{v}{Lb} = \bra{v}{b^*}\hbox{\quad and \quad}q(c) - q(b) = q\big(v + b\big) - q(b) = q(v) + \bra{v}{Lb}.
\end{equation}
From \eqref{subg3} and \eqref{subg4}, $g_d(c) - g_d(b) = q(v) + \bra{v}{Lb} \ge \bra{v}{Lb} = \bra{v}{b^*} =  \bra{c - b}{b^*}$.   Since this holds for all $c \in \dom\,g_d = V - d$, $b^* \in \partial g_d(b)$.   This completes the proof of the implication ``$\rl$'' in \eqref{subg1}.

We now prove the implication ``$\lr$'' in \eqref{subg1}.   So let $b^* \in \partial g_d(b)$.   Let $v$ be an arbitrary element of $V$ and $\lambda$ be an arbitrary element of $\RR$.   Let $c = b + \lambda v$.   Since $b \in V - d$, $c \in V - d$ also.  It follows from \Def{SUBdef}, and \eqref{not1} that   
\begin{align*}
\lambda\bra{v}{b^*} &= \bra{c - b}{b^*} \le g_d(c) - g_d(b) = q(c) - q(b)\\
&= q(b + \lambda v) - q(b) = \lambda\bra{v}{Lb} + \lambda^2q(v).
\end{align*}
Consequently, $\lambda^2q(v) + \lambda\bra{v}{Lb - b^*} \ge 0$.   Since this holds for all $\lambda \in \RR$, $\bra{v}{Lb - b^*} = 0$.   Since $v$ is an arbitrary element of $V$, $\bra{V}{Lb - b^*} = \{0\}$, as required.

If now $b^* \in \partial g_d(b)$  and  $V^\A \in \CML(B^*)$ then, from what we have already proved,      $\bra{V}{Lb - b^*} = \{0\}$.   From \eqref{rhoprops} and \eqref{XADJdef}, $\xbra{V}{\rho(Lb - b^*)} = \{0\}$ and so $\rho(Lb - b^*) \in V^\A$.   Since $V^\A \in \CML(B^*)$, \eqref{Wmon} gives $\qt\circ\rho(Lb - b^*) \ge 0$ and, from \eqref{rhoprops} again, $\qt(Lb - b^*) \le 0$, and \eqref{not9} implies that $q(b) + \qt(b^*) \le \bra{b}{b^*}$. 
\end{proof}
%\Lem{ALHORlem}
\begin{lemma}[Subtangents with low slope]\label{ALHORlem}
Let $f \in \PCLSC(B)$, $\inf_Bf > -\infty$, and $\eta > 0$. Then there exist $b \in \dom\,f$ and $e^* \in \partial f(b)$ such that $\|e^*\| \le \eta$.
\end{lemma}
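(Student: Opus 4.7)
The plan is to apply Ekeland's variational principle to produce a nearly--minimizing point where $f$ admits a ``slope'' of size at most $\eta$, and then convert this slope bound into a subgradient bound via the Moreau--Rockafellar sum formula combined with \Lem{NORMlem}.

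First, since $f \in \PCLSC(B)$, $\inf_B f > -\infty$, and $B$ is a complete metric space under its norm, Ekeland's variational principle applies. I would choose any $b_0 \in \dom f$ with $f(b_0) < \inf_B f + \eta^2$, and then apply Ekeland's principle with the parameters $\varepsilon := \eta^2$ and $\lambda := \eta$ to produce $b \in \dom f$ such that, for every $c \in B$,
$$f(c) + \eta\|c - b\| \ge f(b).$$

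Next, I would rewrite this inequality as saying that $b$ is a global minimizer of the convex function
$$h(c) := f(c) + \tfrac{1}{2}\nu(c - b) = f(c) + \eta\|c - b\|,$$
so that $0 \in \partial h(b)$. Because $\tfrac{1}{2}\nu(\cdot - b)$ is continuous and convex on all of $B$, the Moreau--Rockafellar sum formula gives
$$\partial h(b) = \partial f(b) + \tfrac{1}{2}\partial\nu(0).$$
By \Lem{NORMlem} applied at the origin, every element of $\partial\nu(0)$ has norm at most $2\eta$, so every element of $\tfrac{1}{2}\partial\nu(0)$ has norm at most $\eta$. Writing $0 = e^* + d^*$ with $e^* \in \partial f(b)$ and $\|d^*\| \le \eta$ therefore yields $\|e^*\| = \|d^*\| \le \eta$, which is the desired conclusion.

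The only real obstacle is the appeal to Ekeland's variational principle in a paper that is largely self-contained. If one prefers to avoid quoting Ekeland, the construction can be mimicked directly in the style of the iterative argument already used in the proof of \Thm{SUFFthm}: starting from an approximate minimizer $b_0$ of $f$, inductively build a sequence $(b_n)$ by choosing each $b_{n+1}$ to improve the auxiliary objective $f + \eta\|\cdot - b_n\|$ by geometric amounts, and then pass to a limit $b$ using lower semicontinuity and boundedness below. Either route delivers the minimizing $b$, and the subdifferential calculus step is routine.
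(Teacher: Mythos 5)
Your argument is correct, but it follows a different route from the paper: the paper proves this lemma by a direct citation of the Br\o ndsted--Rockafellar theorem (Br\o ndsted--Rockafellar, Z\u alinescu, or \cite[Theorem 18.6]{HBM}), with no further argument. What you do instead is give a self-contained derivation of exactly the case needed: Ekeland's variational principle with $\eps = \eta^2$, $\lambda = \eta$ produces $b \in \dom f$ minimizing $c \mapsto f(c) + \eta\|c - b\|$, and then the Moreau--Rockafellar sum formula (the same Rockafellar result the paper itself invokes in the proof of \Lem{FNLlem}) splits $0 = e^* + d^*$ with $e^* \in \partial f(b)$ and $\|d^*\| \le \eta$, giving $\|e^*\| \le \eta$. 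This is in fact the standard modern proof of the relevant portion of the Br\o ndsted--Rockafellar theorem, so the two routes are essentially equivalent in depth: the paper's citation is shorter and leans on a reference already in its bibliography, while your version trades that citation for a dependence on Ekeland's principle plus convex calculus, making the lemma self-contained modulo those two tools. Two small points of hygiene: in your decomposition you should note that $\partial\bigl(\tfrac12\nu(\cdot - b)\bigr)(b) = \tfrac12\,\partial\nu(0)$ by translation invariance before applying \Lem{NORMlem} at the origin (only the norm bound $\|b^*\| \le 2\eta$ of that lemma is relevant there), and your closing remark about replacing Ekeland by an iterative construction in the style of \Thm{SUFFthm} is plausible but would itself need to be carried out to count as a proof; as written, the Ekeland route is the one that is complete.
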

\begin{proof}
From Br\o ndsted--Rockafellar, \cite[Lemma, pp.\ 608--609]{BRON} with $x^* = 0$, if $f(x) - \inf_Bf \le \eps$ and $\lambda > 0$, there exist $b \in B$ and $e^* \in \partial f(b)$ such that  $\|e^*\| \le \eps/\lambda$.   So let $x \in \dom\,f$, $\eps > f(x) - \inf_Bf \ge 0$ and $\lambda > \eps/\eta > 0$. 
\end{proof}

%\Lem{NORMlem}
\begin{lemma}\label{NORMlem}
Let $\eta > 0$, $b \in B$ and $c^* \in \partial(2\eta\|\cdot\|)(b)$. Then
\smallbreak
\centerline{ $\|c^*\| \le 2\eta$ and $\bra{b}{c^*} = 2\eta\|b\|$.}
\end{lemma}
\begin{proof}
Let $a \in B$.  Then\quad $\bra{a}{c^*} = \bra{a + b}{c^*} - \bra{b}{c^*} \le 2\eta\|a + b\| - 2\eta\|b\| \le 2\eta\|a \|$.\quad Since this holds for all $a \in B$, $\|c^*\| \le 2\eta$.   However, $-\half\bra{b}{c^*} = \bra{\half b - b}{c^*} \le 2\eta\|\half b\| - 2\eta\|b\| = - \eta\|b\|$, so $\bra{b}{c^*} \ge 2\eta\|b\|$, and the equality follows since $\|c^*\| \le 2\eta$. 
\end{proof}
%\Lem{hlem}
\begin{lemma}\label{hlem}
Let $h \in \PCLSC(B)$, $\inf h(B) > -\infty$ and $\eta > 0$.   Then there exist $b \in \dom\,h$ and $b^* \in \partial h(b)$ such that $\|b^*\| \le 3\eta$ and $\bra{b}{b^*} \le -\eta\|b\|$.  
\end{lemma}
\begin{proof}
From \Lem{ALHORlem} with $f:= h + 2\eta\|\cdot\|$, there exist $b \in \dom\,h$ and $e^* \in \partial (h + 2\eta\|\cdot\|)(b)$ such that $\|e^*\| \le \eta$.   Since $\|\cdot\|$ is continuous, Rockafellar's formula for the subdifferential of a sum \big(Rockafellar, \cite[Theorem 3(b), p.\ 85]{FENCHEL}, Z\u alinescu, \cite[Theorem 2.8.7(iii), p.\ 127]{ZBOOK}, or \cite[Theorem 18.1, pp.\ 74--75]{HBM}\big) provide $b^* \in \partial h(b)$ and $c^* \in \partial(2\eta\|\cdot\|)(b)$ such that $b^* + c^* = e^*$.   But then $b^* = e^* - c^*$ and so, from \Lem{NORMlem},
\begin{equation*}
\|b^*\| \le \|e^*\| + \|-c^*\| \le \eta + 2\eta \quand \bra{b}{b^*} = \bra{b}{e^*} - \bra{b}{c^*} \le \eta\|b\| - 2\eta\|b\|.
\end{equation*}
The result now follow easily.  
\end{proof}
%\Lem{FNLlem} 
\begin{lemma}\label{FNLlem}
Let $V \in \CML(B)$ and $V^\A \in \CML(B^*)$.   Then:\par\noindent
{\rm(a)}\enspace $V \in \MML(B)$.\par\noindent
{\rm(b)}\enspace For all $d^* \in B^*$, $\infn\qt\big(L(V) - d^*\big) \le 0$.\par\noindent
{\rm(c)}\enspace $w \in V$, $b^* \in B^*$ and $\xbra{w}{b^*} \ne 0 \lr \inf\qt(V^\A - b^*) < 0$.\par\noindent
{\rm(d)}\enspace $V^\A\in \MML(B^*)$.
\end{lemma}
% Proof of \Lem{FNLlem}
\begin{proof}
(a)\enspace We suppose that $d \in B$ and $\inf q(V - d) \ge 0$, and we shall prove that $d\in V$.   From \Lem{subglem}, $g_d \in \PCLSC(B)$ and $\infn_Bg_d  \ge 0$.
Let $\eta > 0$. From \Lem{hlem} with $h := g_d$, there exist $b \in  V - d$ and $b^* \in \partial g_d(b)$ such that
%\eqref{FNL1}
\begin{equation}\label{FNL1}
\|b^*\| \le 3\eta\hbox{\quad and\quad}\bra{b}{b^*} \le -\eta\|b\|.
\end{equation} 
It follows from this that
%\eqref{FNL2}
\begin{equation}\label{FNL2}
- \qt(b^*) =  - \half\Bra{b^*}{\Lt b^*}  \le \half\|b^*\|^2 \le \textstyle\frac{9}{2}\eta^2 < 5\eta^2.
\end{equation}
Since $b^* \in \partial g_d(b)$, \Lem{subglem} and \eqref{FNL1} give $q(b) + \qt(b^*) \le -\eta\|b\|$. Since $b \in V - d$, $q(b) \ge 0$ and, from \eqref{FNL2}, $\qt(b^*) \ge - 5\eta^2$.   Thus $-5\eta^2 \le -\eta\|b\|$, from which $\|b\| \le 5\eta$.   Since $\eta$ can be taken arbitrarily small and $V - d$ is closed, $0 \in V - d$, and so $d \in V$, as required.
\par
(b)\enspace If $\infn\qt\big(L(V) - d^*\big) = -\infty$, the result is obvious.   So we can and will suppose that $\infn\qt\big(L(V) - d^*\big) > -\infty$  
Let $h:= g_0 - d^*$.   From \Lem{subglem}, $h \in \PCLSC(B)$. Let $\eta > 0$.   Then it follows from \eqref{not9} that  
$$\inf h(B) = \inf(q - {d^*})(V) =\infn\qt\big(L(V) - d^*\big)- \qt(d^*) > -\infty.$$
From \Lem{hlem}, there exist $b \in \dom\,h = V$ and $a^* \in \partial h(b)$ such that\break $\|a^*\| \le 3\eta$ and $\bra{b}{a^*} \le -\eta\|b\| \le 0$.   The definition of $h$ gives us $b^* \in \partial g_0(b)$ such that $a^* = b^* - d^*$.   Thus
%\eqref{FNL4}
\begin{equation}\label{FNL4}
\|b^* - d^*\| \le 3\eta \quand -\bra{b}{d^*} \le -\bra{b}{b^*},
\end{equation}
from which\quad $\|b^* + d^*\| \le \|b^* - d^*\| + 2\|d^*\| \le 3\eta + 2\|d^*\|$\quad and so
\begin{align*}
\qt(d^*) - \qt(b^*) &= \half\Bra{d^*}{\Lt d^*} - \half\Bra{b^*}{\Lt b^*} = \half\Bra{d^* - b^*}{\Lt(d^* + b^*)}\\
&\le \half\|b^* - d^*\|\|b^* + d^*\| \le \half 3\eta(3\eta + 2\|d^*\|),
\end{align*}
from which
%
%\eqref{FNL3}
\begin{equation}\label{FNL3}
\qt(d^*) \le \qt(b^*) + \textstyle\frac{9}{2}\eta^2 + 3\|d^*\|\eta.
\end{equation}
From \eqref{not9} with $b^*$ replaced with $d^*$, \eqref{FNL4} and \eqref{FNL3}, 
$$\qt(Lb - d^*) = q(b) - \bra{b}{d^*} + \qt(d^*) \le q(b) - \bra{b}{b^*} + \qt(b^*) + \ts\frac{9}{2}\eta^2 + 3\|d^*\|\eta.$$
Since $b^* \in \partial g_0(b)$, \Lem{subglem} with $d = 0$ gives $q(b) + \qt(b^*) \le \bra{b}{b^*}$, so 
$$\qt(Lb - d^*) \le 0 + \ts\frac{9}{2}\eta^2 + 3\|d^*\|\eta = \ts\frac{9}{2}\eta^2+ 3\|d^*\|\eta.$$
Since $b \in V$, the result now follows by making $\eta$ arbitrarily small.
\par
(c)\enspace Let $w \in V$, $b^* \in B^*$ and $\xbra{w}{b^*} \ne 0$. We shall prove that
%\eqref{Sec1}
\begin{equation}\label{Sec1}
\inf\qt(V^\A - b^*) < 0.
\end{equation}
Let 
%\eqref{Sec2}
\begin{equation}\label{Sec2}
{U}:= \big\{u \in V\colon\ \xbra{u}{b^*} = 0\big\} \subset V \setminus \{w\}.
\end{equation}
${U} \in \CML(B)$.  Since $U$ is a proper subspace of $V$, $U \not\in \MML(B)$.   Applying (a), with $V$ replaced by $U$, $U^\A \not\in \CML(B^*)$, and so \eqref{Wmon} with $W = U^\A$ provides
%\eqref{Sec3}
\begin{equation}\label{Sec3}
c^* \in {U}^\A\ \st\ \qt(c^*) < 0.
\end{equation}
Let $v$ be an arbitrary element of $V$.   Let $u := \xbra{v}{b^*}w - \xbra{w}{b^*}v$. Since $u$ is a linear combination of $w,v \in V$, we also have $u \in V$.   Furthermore, $\xbra{u}{b^*} = \xbra{v}{b^*}\xbra{w}{b^*} - \xbra{w}{b^*}\xbra{v}{b^*} = 0$ and so, from \eqref{Sec2}, $u \in U$.   Thus, from
\eqref{Sec3} and \eqref{XADJdef} (applied to $U$),
\begin{align*}
\Xbra{v}{\xbra{w}{c^*}b^* - \xbra{w}{b^*}c^*} &= \xbra{v}{b^*} \xbra{w}{c^*} - \xbra{w}{b^*}\xbra{v}{c^*}\\
&= \Xbra{\xbra{v}{b^*}w - \xbra{w}{b^*}v}{c^*} = \xbra{u}{c^*} = 0.
\end{align*}
Since this holds for all $v\in V$,
%\eqref{Sec5}
\begin{equation}\label{Sec5}
\xbra{w}{c^*}b^* - \xbra{w}{b^*}c^* \in V^\A.
\end{equation}
From \eqref{Sec3}, $\qt(-\xbra{w}{b^*}c^*) = \xbra{w}{b^*}^2\qt(c^*) < 0$, and so \eqref{Wmon} with $W = V^\A$ implies that $- \xbra{w}{b^*}c^* \not\in V^\A$.   Thus, from \eqref{Sec5}, $\xbra{w}{c^*} \ne 0$.   Now set $\lambda := \xbra{w}{c^*} \neq 0$ and $\mu := \xbra{w}{b^*} \neq 0$, so  
\eqref{Sec5} reads $\lambda b^* - \mu c^* \in V^\A$.   Dividing by $\lambda$, $b^* - (\mu/\lambda) c^* \in V^\A$.
From \eqref{Sec3}, $\inf\qt(V^\A - b^*) \le \qt\big(b^* -  (\mu/\lambda)c^* - b^*\big)  = {(\mu/\lambda)^2}\qt(c^*) < 0$.   This establishes \eqref{Sec1}, and completes the proof of (c).\par
(d) It is immediate from (c) that 
$$b^* \in B^*,\ \inf\qt(V^\A - b^*) \ge 0 \hbox{ and }w \in V \lr \xbra{w}{b^*} = 0.$$
So let $b^* \in B^*$ and $\inf\qt(V^\A - b^*) \ge 0$.   Then $\xbra{V}{b^*} = \{0\}$ and so, from \eqref{XADJdef}, $b^* \in V^\A$.   This completes the proof of (d).
\end{proof}
A remark about \Thm{qqthm} is in order.   If we knew that $V$ is {\em maximally} monotone then \Thm{qqthm}(a) is exactly that $V$ be of type (NI).   We shall see in  \Thm{qqthm}(c) that, when  $V \in \CML(B)$, the condition (a) of \Thm{qqthm} actually {\em implies} that $V$ {\em is} maximally monotone.

%\Thm{qqthm}
\begin{theorem}\label{qqthm}
Let $V \in \CML(B)$.   Then the following 5 conditions on $V$ are equivalent:
\par\noindent
{\rm(a)}\enspace For all $b^* \in B^*$, $\infn\qt\big(L(V) - b^*\big) \le 0$.\par\noindent
{\rm(b)}\enspace $\infn \qt(V^\A) \ge \infn q(V)$.\par\noindent
{\rm(c)}\enspace $V^\A \in \CML(B^*)$.\par\noindent
{\rm(d)}\enspace $V \in \MML(B)$ and $V$ is of type (NI).\par\noindent 
{\rm(e)}\enspace $V^\A \in \MML(B^*)$.
\end{theorem}
%Proof of \Thm{qqthm}
\begin{proof}
Suppose first that (a) is true and $a^* \in V^\A$.   From \eqref{XADJdef}, \eqref{rhoprops} and \eqref{not9},
\begin{align*}
\inf q(V) - \qt(a^*) &=\infn_{v \in V}[q(v) + \qt(\rho a^*)]
= \infn_{v \in V}[q(v) - \xbra{v}{a^*} + \qt(\rho a^*)]\\
&= \infn_{v \in V}[q(v) - \bra{v}{\rho a^*} + \qt(\rho a^*)]  = \infn_{v \in V}\qt(Lv - \rho a^*) \le 0.
\end{align*}
Thus $\qt(a^*) \ge \inf q(V)$.  Consequently, (b) is true.\par
Suppose now that (b) is true.   From \eqref{Wmon}, $\infn q(V) \ge 0$, thus $\infn \qt(V^\A) \ge 0$.  From \eqref{Wmon}, (c) is true.
\par
It is immediate from \Lem{FNLlem}(a,b) that (c)$\lr$(d) and, from\break \Defs{Gdef} and \ref{NIdef}, that (d)$\lr$(a).   Thus (a)--(d) are equivalent.
\par
Finally, it is clear from \Lem{FNLlem}(d) that (c)$\iff$(e).  
\end{proof}
%\Rem{BBrem}
\begin{remark}\label{BBrem}
By virtue of \Thm{NIrefl}, \Thm{qqthm} generalizes the\break result proved in Brezis-Browder \cite[Theorem 2]{BB}: {\em Let $E$ be a reflexive Banach space and $V \in \CML(B)$.   Then $V \in \MML(B) \iff$  $V^\A \in \CML(B^*)$.}
\end{remark}
%\Sec{EQUIVsec}
\section{Two sets of equivalences}\label{EQUIVsec}
We recall that $M$ is a maximally monotone subset of $B$.
%:   \Thm{EQUIVthm}
\begin{theorem}\label{EQUIVthm}
The following five conditions are equivalent:\par\noindent
{\rm(a)}\enspace $M$ is quasidense.\par\noindent
{\rm(b)}\enspace $G_M \ge 0$ on $B^*$, \hbox{\rm\ i.e.,  $M$ is of type (NI).}\par\noindent
{\rm(c)}\enspace $F_M \ge 0$ on $B^*$.\par\noindent
{\rm(d)}\enspace $P_M \ssquare r \le 0$ on $B$.\par\noindent
{\rm(e)}\enspace $(F_M \ssquare \rt) \circ L \ge 0$ on $B$.
\end{theorem}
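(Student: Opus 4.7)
The plan is to close a cycle of implications among (a)--(e), leaning on the results already established.  Most arrows are immediate.  By \Lem{Glem}(b), $G_M\le F_M$ on $B^*$, giving (b)$\Rightarrow$(c).  By \Cor{Signscor}, (c) implies each of (a), (d), (e) (its three parts), where for (e) one additionally uses that the $\squaree$-value agrees with the $\ssquare$-value at points where the minimum is attained.  \Thm{SUFFthm}(b) gives (d)$\Rightarrow$(a).  For (a)$\Rightarrow$(d), I would substitute $b=m\in M$ into $(P_M\ssquare r)(c)=\inf_{b\in B}[P_M(b)+r(c-b)]$: since $P_M(m)=0$ and $r(c-m)=r(m-c)$ (both $\|\cdot\|^2$ and $q$ are even), the quasidensity of $M$ yields $(P_M\ssquare r)(c)\le\inf_{m\in M}r(m-c)=0$.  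The equivalence (d)$\iff$(e) comes from \Thm{Signsthm}: the identity $P_M\ssquare r=-(F_M\squaree\rt)\circ L$, together with the fact that $\squaree$ is attained here and hence coincides with $\ssquare$, interchanges non-positivity on the left with non-negativity on the right.

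The only non-routine step, needed to close the cycle, is (c)$\Rightarrow$(b).  The obstacle is that \Lem{Glem}(b) points the wrong way: $G_M\le F_M$ yields $G_M\ge 0\Rightarrow F_M\ge 0$, not the converse.  My plan is to exploit (c) more fully.  By \Cor{Signscor}(b), $(F_M\squaree\rt)\circ L=0$ on $B$ with the minimum attained, so for each $c\in B$ there is $b^*_c\in B^*$ with $F_M(b^*_c)=0$ and $\rt(Lc-b^*_c)=0$ (both summands are non-negative under (c) and sum to zero).  Combining this attainment with the quasidensity of $M$, which follows from (c) via \Cor{Signscor}(c), I aim to produce, for every $b^*\in B^*$ and $\eps>0$, an $m\in M$ with $\qt(Lm-b^*)<\eps$, thereby establishing $G_M(b^*)\ge 0$.

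The main technical difficulty is bridging the $r$-type approximation provided by quasidensity and the $\qt$-type non-negativity required by type (NI).  Quasidensity yields $m\in M$ with $r(m-b)$ small, which controls $q(m-b)$ and the cross-term but does not bound the graph-norm $\|m-b\|$.  Moreover, $L(B)=E^*\times\wh E$ is in general only weak-$*$ dense in $B^*$, not norm-dense, so one cannot directly norm-approximate a general $b^*$ by some $Lc$.  An iterative construction, in the spirit of the proof of \Thm{SUFFthm} and incorporating Goldstine-type corrections when $y\dbs\notin\wh E$ (writing $b^*=(y^*,y\dbs)$), should convert approximate minimizers of $P_M(b)+\qt(Lb-b^*)$ over $B$ into actual elements of $M$ with small $\qt(Lm-b^*)$.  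This iterative step is where I expect the real work to lie.
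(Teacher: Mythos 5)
Your arrows (b)$\Rightarrow$(c) (via \Lem{Glem}(b)), (c)$\Rightarrow$(a),(d),(e) (via \Cor{Signscor}, noting that the value of $\squaree$ equals that of $\ssquare$), (d)$\Rightarrow$(a) (via \Thm{SUFFthm}(b)), (a)$\Rightarrow$(d) (plugging $b=m\in M$ into the inf-convolution and using $P_M(m)=0$ and evenness of $r$ --- a nice direct argument the paper does not use), and (d)$\iff$(e) (via \Thm{Signsthm}) are all correct. But the cycle never re-enters (b): you need (a)$\Rightarrow$(b) or (c)$\Rightarrow$(b), and for that you offer only a plan --- an iterative construction ``in the spirit of \Thm{SUFFthm}'' with Goldstine-type corrections --- which is not carried out. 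As it stands, (b) is only shown to imply the other four conditions, so the theorem is not proved. Moreover, the proposed route looks harder than necessary: iterating $r$-small approximations does not obviously control the sign of $\qt(Lm-b^*)$ for a fixed $b^*$ with $y\dbs\notin\wh E$, which is exactly the difficulty you flag.

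The missing implication has a short, non-iterative proof, and it is the heart of the paper's argument (the paper closes the cycle with (a)$\Rightarrow$(b) rather than (c)$\Rightarrow$(b)). Given $b^*=(y^*,y\dbs)\in B^*$ and $\eps>0$, choose $z^*\in E^*$ with $\|z^*\|\le\|y\dbs\|$ and $\bra{z^*}{y\dbs}\ge\|y\dbs\|^2-\eps$, and set $b:=(0,y^*+z^*)\in B$. Then $Lb-b^*=(z^*,-y\dbs)$ and $\rt(Lb-b^*)=\half\|z^*\|^2+\half\|y\dbs\|^2-\bra{z^*}{y\dbs}\le\eps$. This is the key point you were missing: one does not need $\|Lb-b^*\|$ small (impossible in general, since $L(B)$ is only weak* dense), only $\rt(Lb-b^*)$ small, and $\rt$ vanishes on pairs related by the duality map, so an almost-norming functional $z^*$ suffices. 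Now quasidensity at $b$ gives $m\in M$ with $r(m-b)<\eps$, and the analogue of \eqref{not8} for $\qt$ \big(from \eqref{not3} with $\lambda=1$, $\mu=-1$\big) gives $\qt(Lm-b^*)\le\qt(Lm-Lb)+\|Lm-Lb\|\,\|Lb-b^*\|+\qt(Lb-b^*)\le\rt(Lm-Lb)+\rt(Lb-b^*)=r(m-b)+\rt(Lb-b^*)<2\eps$, so by \eqref{G}, $-G_M(b^*)<2\eps$ and hence $G_M(b^*)\ge0$. Inserting this one-step argument for (a)$\Rightarrow$(b) closes your cycle and completes the proof.
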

\begin{proof}
Suppose first that (a) is satisfied.  Let $b^* = (y^*,y\dbs) \in B^*$ and $\eps > 0$.  The definition of $\|y\dbs\|$ gives an element $z^*$ of $E^*$ such that $\|z^*\| \le \|y\dbs\|$ and $\bra{z^*}{y\dbs} \ge \|y\dbs\|^2 - \eps$.   Let $b:= (x,x^*) = (0,y^* + z^*) \in B$.   Then $Lb - b^* = (y^* + z^*,0) - (y^*,y\dbs) = (z^*,-y\dbs)$.
From \eqref{rtdef},
$$\rt(Lb - b^*) = \half\|z^*\|^2 + \half\|y\dbs\|^2 - \bra{z^*}{y\dbs} \le \|y\dbs\|^2 - \bra{z^*}{y\dbs} \le \eps.$$
The quasidensity of $M$ provides $m \in M$ such that $r(m - b) < \eps$.  Since $\big\|\Lt\big\| \le 1$, from \eqref{not3} with $\lambda = 1$, $\mu = -1$,  $d^* = Lm - Lb$, $e^* = Lb - b^*$,  \eqref{rtdef} and \eqref{not7},
\begin{align*}
-G_M(b^*) &\le \qt(Lm - b^*) \le \qt(Lm - Lb) + \|Lm - Lb\|\|Lb - b^*\| + \qt(Lb - b^*)\\
 &\le \qt(Lm - Lb) + \half\|Lm - Lb\|^2 + \half\|Lb - b^*\|^2 + \qt(Lb - b^*)\\
 &= \rt(Lm - Lb) + \rt(Lb - b^*) = r(m - b) + \rt(Lb - b^*) < 2\eps.
\end{align*}
Thus $G_M(b^*) \ge 0$, and so (b) follows from \eqref{NIdef2}.

It is clear from  \Lem{Glem}(b) that (b)$\lr$(c), from \Cor{Signscor}(a) that (c)$\lr$(d) and, from \Thm{SUFFthm}(b), that (d)$\lr$(a).   Thus (a)--(d) are equivalent, and \Thm{Signsthm} gives (d)$\iff$(e).
\end{proof}
\begin{remark}
We will give a sixth and seventh condition equivalent to those in  \Thm{EQUIVthm} in \Thm{Dthm}.   
\end{remark}
We now give nine characterizations of the Gossez extension of a \cmqd\ subset of $B$.   The equivalence of (a) and (g) below will be used in \Lem{PAIRSlem}. 
%:   \Thm{EQthm}
\begin{theorem}\label{EQthm}
Let $M$ be a \cmqd\ subset of $B$ and $b^* \in B^*$.   Then the following ten conditions are equivalent:\par\noindent
{\rm(a)}\enspace $G_M(b^*) \le 0$,\hbox{\rm\ i.e.,  $b^* \in M^\sharp$.}\par\noindent
{\rm(b)}\enspace $G_M(b^*) = 0$.\par\noindent
{\rm(c)}\enspace $(G_M \ssquare\qt)(b^*) = 0$.\par\noindent
{\rm(d)}\enspace $(G_M \ssquare \qt)(b^*) \ge 0$.\par\noindent
{\rm(e)}\enspace $F_M(b^*) \le 0$.\par\noindent
{\rm(f)}\enspace $F_M(b^*) = 0$.\par\noindent
{\rm(g)}\enspace ${\Phi_M}\dbs\big(\Lt b^*\big) = \qt(b^*)$.\par\noindent
{\rm(h)}\enspace ${\Phi_M}\dbs\big(\Lt b^*\big) \le \qt(b^*)$.\par\noindent
{\rm(i)}\enspace $(F_M \ssquare \qt)(b^*) = 0$.\par\noindent
{\rm(j)}\enspace $(F_M \ssquare \qt)(b^*) \ge 0$.
\end{theorem}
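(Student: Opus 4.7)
The plan is to split the eight conditions into a $G_M$-group (a)--(d) and an $F_M$-group (e)--(h), run routine implications inside each via \Lems{Glem}, \ref{GFlem} and \ref{FMLPMq}, bridge the two groups by the pointwise inequality $G_M \le F_M$, and close the cycle with one nontrivial step (a)$\lr$(f) that would appeal to the type (D) characterization from \Thm{BBWY}.

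First I would handle the routine implications. Type (NI) yields $G_M \ge 0$ by \eqref{NIdef2}, and \Lem{Glem}(b) upgrades this to $F_M \ge 0$; hence (a)$\iff$(b) and (e)$\iff$(f). The two inclusions in \eqref{FG2} give (b)$\lr$(c) and (f)$\lr$(g), while (c)$\lr$(d) and (g)$\lr$(h) are immediate. From $G_M \le F_M$ we get $G_M \ssquare \qt \le F_M \ssquare \qt$ pointwise, so (d)$\lr$(h). For (h)$\lr$(a): $(F_M \ssquare \qt)(b^*) \ge 0$ forces $F_M(c^*) + \qt(b^* - c^*) \ge 0$ for every $c^* \in B^*$; specializing to $c^* = Lm$ with $m \in M$ and using $F_M(Lm) = 0$ (\Lem{FMLPMq}) gives $\qt(b^* - Lm) \ge 0$, i.e., $b^* \in M^\sharp$ by \Def{MGdef}. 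This covers everything except (a)$\lr$(f).

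The hard part will be (a)$\lr$(f), since inequality (F1) in \Lem{GFlem} only supplies lower bounds on $F_M(b^*)$. Because $F_M(b^*) \ge 0$ holds automatically, by $\Phi_M^* = F_M + \qt$ (\Lem{Flem}(b)) it suffices to show $\Phi_M^*(b^*) \le \qt(b^*)$. I would invoke \Thm{BBWY}: the hypothesis makes $M$ of type (D), so there is a bounded net $m_\alpha = (s_\alpha, s_\alpha^*) \in M$ with $Lm_\alpha \to b^* =: (y^*, y\dbs)$ in $\TNW$. Since $\TNW$ is stronger than $w(B^*, B) = w(E^*, E) \times w(E\dbs, E^*)$ and $\Phi_M^*$ is $w(B^*, B)$-lower semicontinuous (as the conjugate of $\Phi_M \in \PCLSC(B)$, \Lem{Flem}(a)), we get $\Phi_M^*(b^*) \le \liminf_\alpha \Phi_M^*(Lm_\alpha)$. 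By \Lems{Flem}(b) and \ref{FMLPMq}, $\Phi_M^*(Lm_\alpha) = F_M(Lm_\alpha) + \qt(Lm_\alpha) = q(m_\alpha) = \bra{s_\alpha^*}{\wh{s_\alpha}}$, and the identity
$$\bra{s_\alpha^*}{\wh{s_\alpha}} - \bra{y^*}{y\dbs} = \bra{s_\alpha^* - y^*}{\wh{s_\alpha}} + \bra{y^*}{\wh{s_\alpha} - y\dbs}$$
has both summands tending to $0$ (boundedness of $s_\alpha$ combined with $\|s_\alpha^* - y^*\| \to 0$ for the first, $\wh{s_\alpha} \to y\dbs$ in $w(E\dbs,E^*)$ for the second). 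Hence $\Phi_M^*(b^*) \le \qt(b^*)$, yielding (e) and therefore (f), closing all eight equivalences.
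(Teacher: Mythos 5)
Your proposal is correct, and every step checks out: the routine implications (a)$\iff$(b), (e)$\iff$(f) (via $G_M \ge 0$, $F_M \ge 0$ from \Thm{EQUIVthm}), (b)$\lr$(c), (f)$\lr$(g) (via \eqref{FG2}), the trivial (c)$\lr$(d), (g)$\lr$(h), the comparison (d)$\lr$(h) from $G_M \le F_M$, and (h)$\lr$(a) via $F_M \circ L = 0$ on $M$ are all exactly in the spirit of the paper, and your net argument for (a)$\lr$(f) is sound (type (D) gives a bounded net $m_\alpha$ with $Lm_\alpha \to b^*$ in $\TNW$, which is finer than $w(B^*,B)$, so the $w(B^*,B)$-lower semicontinuity of ${\Phi_M}^*$ together with ${\Phi_M}^*(Lm_\alpha) = q(m_\alpha) \to \qt(b^*)$ yields $F_M(b^*) \le 0$). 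However, the paper bridges the two families differently and much more cheaply: it proves the single cycle (a)$\lr$(b)$\lr\cdots\lr$(h)$\lr$(a), and the crucial link (d)$\lr$(e) is elementary --- since $P_M = G_M \circ L$ (\Lem{Glem}(a)), the defining infimum \eqref{DEL} gives $-F_M(b^*) = \infn_{b \in B}\big[G_M(Lb) + \qt(b^* - Lb)\big] \ge (G_M \ssquare \qt)(b^*) \ge 0$, i.e.\ (d) implies (e) directly, with no appeal to \Thm{BBWY}, no bounded nets, and no lower-semicontinuity argument. So the paper's proof stays entirely inside its own convex-analytic lemmas (\Lems{Glem}, \ref{GFlem}, \ref{FMLPMq} and \Thm{EQUIVthm}), whereas your route imports the deep Bauschke--Borwein--Wang--Yao equivalence (NI)$\iff$(D); that is legitimate here because the hypothesis explicitly allows any of the nine equivalent conditions, and your argument has the side benefit of exhibiting concretely that ${\Phi_M}^* \le \qt$ on $M^\sharp$ via approximation by elements of $L(M)$, but it is substantially heavier than needed. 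If you want a self-contained proof, replace your (a)$\lr$(f) step by the paper's one-line (d)$\lr$(e).
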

\begin{proof}
It follows from \Thm{EQUIVthm}(a)$\lr$(b) and \eqref{FG2} that (a)$\lr$(b)$\lr$(c).    It is obvious that (c)$\lr$(d).  Now suppose that (d) is true.   From \eqref{DEL} and \Lem{Glem}(a),\quad $-F_M(b^*) = \infn_{b \in B}[P_M(b) + \qt(b^* - Lb)]$\quad and, for all $b \in B$,\quad $P_M(b) + \qt(b^* - Lb) = G_M(Lb) + \qt(b^* - Lb) \ge (G_M \ssquare \qt)(b^*) \ge 0$.\break Thus $-F_M(b^*) \ge 0$, and we have proved that (d)$\lr$(e).    It follows from\break \Thm{EQUIVthm}(a)$\lr$(c) that (e)$\lr$(f).   If (f) is true then, from \eqref{FG3}, (g) is true.   Obviously, (g)$\lr$(h).  It is immediate from \Lem{Flem}(c) that (g)$\lr$(i) and (h)$\lr$(j). Obviously (i)$\lr$(j).   If (j) is true then it follows from \Lem{FMLPMq} that, for all $m \in M$,\quad $\qt(b^* - Lm) = F_M(Lm) + \qt(b^* - Lm) \ge (F_M \ssquare \qt)(b^*) \ge 0$,\quad and so  \eqref{G} gives\quad    
$-G_M(b^*) = \inf\qt\big(L(M) - b^*\big) = \inf\qt\big(b^* - L(M)\big) \ge 0$.   Thus (j)$\lr$(a). This completes the proof of \Thm{EQthm}.
\end{proof}
\begin{remark}
A number of the results that we have proved lead to the\break conjecture that {\em if $M$ a \cmqd\ subset of $B$ then $G_M = F_M$ on $B^*$}. We do not know if this is true. 
\end{remark}
%:   \Sec{NETsec}
\section{Limiting results for certain biconjugates}\label{NETsec}
The main result in this section is \Thm{NETthm}, but we start by computing the biconjugates of three simple convex functions on $B$.
%\Lem{CONJlem}
\begin{lemma}\label{CONJlem}
Let $w^* \in E^*$, $b^* = (z^*,z\dbs) \in B^*$, and define the continuous, convex functions $f_1,f_2$ and $h_{w^*}$ on $B$ by\quad $f_1(x,x^*) := \half\|x\|^2$,\quad $f_2(x,x^*) := \half\|x^* - z^*\|^2$\quad and\quad $h_{w^*}(x,x^*) := \bra{x}{w^*}$.\quad     Then we have\quad ${f_1}\dbs(\Lt b^*) = \half\|z\dbs\|^2$,\quad ${f_2}\dbs(\Lt b^*) = 0$\quad and\quad ${h_{w^*}}\dbs(\Lt b^*) = \bra{w^*}{z\dbs}$.
\end{lemma}
\begin{proof}
For all $(y^*,y\dbs) \in B^*$,
\begin{align*}
{f_1}^*(y^*,y\dbs)
&= \supn_{x \in E}\big[\bra{x}{y^*} - \half\|x\|^2]\big] + \supn_{x^* \in E^*}\bra{x^*}{y\dbs}\\
&=  \half\|y^*\|^2 + \I_{\{0\}}(y\dbs).
\end{align*}
Thus,\quad ${f_1}\dbs(\Lt b^*) =\supn_{y^* \in E^*}\big[\Bra{y^*}{z\dbs} - \half\|y^*\|^2\big] +  \supn_{\{0\}}{\wh{z^*}} = \half\|z\dbs\|^2$,\quad as required.   For all $(y^*,y\dbs) \in B^*$,
\begin{align*}
{f_2}^*(y^*,y\dbs)
&= \supn_{x \in E}\bra{x}{y^*} +  \supn_{x^* \in E^*}\big[\bra{x^*}{y\dbs} - \half\|x^* - z^*\|^2\big]\\
&= \supn_{x \in E}\bra{x}{y^*} +  \supn_{x^* \in E^*}\big[\bra{x^* + z^*}{y\dbs} - \half\|x^*\|^2\big]\\
&=  \I_{\{0\}}(y^*) + \half\|y\dbs\|^2 + \bra{z^*}{y\dbs}.
\end{align*}
Thus
\begin{align*}
{f_2}\dbs(\Lt b^*) &= \supn_{\{0\}}{z\dbs} + \supn_{y\dbs \in E\dbs}\big[\Bra{y\dbs}{\wh{z^*}} - \half\|y\dbs\|^2 - \bra{z^*}{y\dbs}\big]\\
&= \supn_{\{0\}}{z\dbs} + \supn_{y\dbs \in E\dbs}[ - \half\|y\dbs\|^2] = \supn_{\{0\}}{z\dbs} + \half\|0\|^2 = 0,
\end{align*}
as required.   Finally,
\begin{align*}
{h_{w^*}}^*(y^*,y\dbs) &= \supn_{x \in E}\bra{x}{y^* - w^*} + \supn_{x^* \in E^*}\bra{x^*}{y\dbs}\\
&= \I_{\{w^*\}}(y^*) + \I_{\{0\}}(y\dbs),
\end{align*}
so\quad ${h_{w^*}}\dbs(\Lt b^*) = \supn_{\{w^*\}}{z\dbs} + \supn_{\{0\}}\wh{z^*}  = \bra{w^*}{z\dbs}$,\quad
which completes the proof of \Lem{CONJlem}.
\end{proof}
%:   \Lem{NEWFlem}
\begin{lemma}\label{NEWFlem}
Let $k \ge 1$, $f_0 \in \PCLSC(B)$, $f_1,\dots,f_k$ be real, convex,\break continuous functions on $B$ and $n \ge 1$.   Let $b^* \in B^*$ and, for all $i = 0, \dots, k$, ${f_i}\dbs(\Lt b^*) \in \RR$.   Then there exists $b \in B$ such that, for all $i = 0, \dots, k$, $f_i(b) \le {f_i}\dbs(\Lt b^*) + \nth$.
\end{lemma}
\begin{proof}
For all $i = 0, \dots, k$, let $g_i := f_i - {f_i}\dbs(\Lt b^*)$, so ${g_i}\dbs(\Lt b^*) = 0$.   Let $g := \bigvee_{i = 0}^k{g_i} \in \PCLSC(B)$. From \cite[Corollary 45.5, p.\ 174]{HBM}, or alternatively \cite[Corollary~7, p.\ 3558]{FS}, $g\dbs(\Lt b^*) = \bigvee_{i = 0}^k{g_i}\dbs(\Lt b^*) = 0$.   However, we have $g\dbs(\Lt b^*) \ge \bra{0}{\Lt b^*} -  g^*(0) = \inf_Bg$, consequently $\inf_Bg \le 0$, and so there exists $b \in B$ such that $g(b) < \nth$.   This gives the desired result.  
\end{proof}
%\Def{NWdef}
\begin{definition}\label{NWdef}
We write $\TNW$ for the  norm $\times\ w(E\dbs,E^*)$ topology on $B^*$.
\end{definition} 
%:   \Thm{NETthm}
\begin{theorem}\label{NETthm}
Let $f_0 \in \PCLSC(B)$, $b^* = (z^*,z\dbs) \in B^*$ and ${f_0}\dbs(\Lt b^*) \in \RR$.  Then there exists a net $b_\alpha = (x_\alpha,{x_\alpha}^*)$ of elements of $B$ such that:
\begin{gather}
%\eqref{NET1}
\limsupn_\alpha f_0(b_\alpha) \le {f_0}\dbs(\Lt b^*).\label{NET1}\\
%\eqref{NET2}
\limsupn_\alpha \|x_\alpha\|^2 \le \|z\dbs\|^2,\hbox{\quad from which\quad}\limsupn_\alpha \|x_\alpha\| \le \|z\dbs\|.\label{NET2}\\
%\eqref{NET3}
\hbox{\rm(a) }\|x_\alpha^* - z^*\| \to 0\hbox{\quad and\quad \rm(b) }{\wh{x_\alpha}} \to {z\dbs}\hbox{ in } w(E\dbs,E^*).\label{NET3}\\
%\eqref{NET4}
Lb_\alpha \to b^*\hbox{ in }{\TNW}.\label{NET4}\\
%\eqref{NET5}
\hbox{\rm(a) }\limsupn_\alpha \|b_\alpha\| \le \|b^*\|\hbox{\quad and\quad \rm(b) }q(b_\alpha) \to \qt(b^*).\label{NET5}
\end{gather}
\end{theorem}
\begin{proof}
We say that the subset $W$ of $E^*$ is {\em symmetric} if
$$w^* \in W \qlr -w^* \in W.$$
Write $\W$ for the set of all finite symmetric subsets $W$ of $E^*$ such that $W \supset \{z^*,-z^*\}$.   $\W$ is directed (up) by inclusion.   Let $\NN := \{1,2,3.\dots\}$.   $\NN$ is directed (up) by the usual ordering.  Let $\D$ be the product directed set $\W \times \NN$,  and $\alpha = (W,n) \in \D$.  From \Lem{NEWFlem}, with $k$ the number of elements of $W$ increased by 2 and \Lem{CONJlem}, there exists $b_\alpha = (x_\alpha,{x_\alpha}^*) \in B$ such that
%\eqref{NET6}
\begin{equation}\label{NET6}
\left.
\begin{gathered}
f_0(b_\alpha) \le {f_0}\dbs(\Lt b^*) + \nth,\\
\half\|x_\alpha\|^2 = f_1(b_\alpha) \le {f_1}\dbs(\Lt b^*) + \nth = \half\|z\dbs\|^2 + \nth,\\
\half\|x_\alpha^* - z^*\|^2 = f_2(b_\alpha)  \le {f_2}\dbs(\Lt b^*) + \nth = \nth\\
\hbox{and, for all }w^* \in W,\\
\bra{w^*}{\wh{x_\alpha}} = \bra{x_\alpha}{w^*} = h_{w^*}(b_\alpha) \le {h_{w^*}}\dbs(\Lt b^*)  + \nth = \bra{w^*}{z\dbs}  + \nth,
\end{gathered}
\right\} 
\end{equation}
which give \eqref{NET1}, \eqref{NET2} and \eqref{NET3}(a).
\par
Now let $w^* \in E^*$.   If $W \in \W$, $W \supset \{w^*,-w^*\}$ and $\alpha = (W,n) \in \D$ then, from \eqref{NET6},\quad  $\bra{w^*}{\wh{x_\alpha}} \le \bra{w^*}{z\dbs} + \nth$\quad and\quad $\bra{-w^*}{\wh{x_\alpha}} \le \bra{-w^*}{z\dbs} + \nth$,\quad from which\quad  $|\bra{w^*}{\wh{x_\alpha}} - \bra{w^*}{z\dbs}| \le \nth$.\quad   Thus\quad $\bra{w^*}{\wh{x_\alpha}} \to \bra{w^*}{z\dbs}$.\quad Since this holds for all $w^* \in E^*$, \eqref{NET3}(b) follows, completing the proof of \eqref{NET3}. Obviously, \eqref{NET3}$\iff$\eqref{NET4}. 
\par
We now establish \eqref{NET5}.   To this end,
\begin{align*}
|q(b_\alpha) - \qt(b^*)| &= |\bra{x_\alpha}{x_\alpha^*} - \bra{z^*}{z\dbs}| \le |\bra{x_\alpha}{x_\alpha^* - z^*}| + |\bra{x_\alpha}{z^*} - \bra{z^*}{z\dbs}|\\
&\le \|x_\alpha\|\|x_\alpha^* - z^*\| + |\bra{z^*}{\wh{x_\alpha} - z\dbs}|
\end{align*}
and \eqref{NET5}(b) follows from \eqref{NET2} and \eqref{NET3}.\quad We also obtain from \eqref{NET2} and \eqref{NET3} that $\limsupn_\alpha \|x_\alpha\|^2 \le \|z\dbs\|^2$ and $\limn_\alpha  \|x_\alpha^*\|^2 = \|z^*\|^2$, and \eqref{NET5}(a) follows by addition.   This completes the proof of \eqref{NET5}.    
\end{proof}
%:   \Sec{Dsec}
\section{Type \cmqd, type (D) and type (WD)}\label{Dsec}
We recall that $M$ is a maximally monotone subset of $B$.
%\Lem{PAIRSlem}
\begin{lemma}\label{PAIRSlem}
Let $M$ be a \cmqd\ subset of $B$ and $b^* \in M^\sharp$.   Then there exists a net $m_\alpha= (s_\alpha,s_\alpha^*)$ of elements of $M$ such that
%\eqref{PAIRS1}
\begin{equation}\label{PAIRS1}
Lm_\alpha \to b^*\hbox{ in }{\TNW}\hbox{ i.e., }\|s_\alpha^* - z^*\| \to 0\hbox{ and }\wh{s_\alpha} \to z\dbs\hbox{ in }w(E\dbs,E^*),
\end{equation}
%\eqref{PAIRS2}
\begin{equation}\label{PAIRS2}
\hbox{and }\limsupn_\alpha \|m_\alpha\| \le \|b^*\|.
\end{equation}
\end{lemma}
\begin{proof}
\Lem{Flem}(a) and \Thm{EQthm}(a)$\lr$(g) imply that $\Phi_M \in \PCLSC(B)$ and ${\Phi_M}\dbs\big(\Lt b^*\big) = \qt(b^*) \in \RR$.   We now apply \Thm{NETthm} with $f_0 = \Phi_M$ and obtain a net $b_\alpha = (x_\alpha,{x_\alpha}^*)$ of elements of $B$ such that \eqref{NET1}--\eqref{NET5} are satisfied.   From \eqref{QD1}, there exists $m_\alpha = (s_\alpha,s_\alpha^*) \in M$ such that $r(m_\alpha - b_\alpha) \le \nth$.   \Lem{Plem}(b) and \Def{PHIdef} give
$$\half\|m_\alpha - b_\alpha\|^2 \le r(m_\alpha - b_\alpha) + P_M(b_\alpha) \le P_M(b_\alpha) + \nth = f_0(b_\alpha) - q(b_\alpha) + \nth.$$
Thus, from  \eqref{NET1} and \eqref{NET5}(b),
%\eqref{PAIRS4}
\begin{equation*}%\label{PAIRS4}
\limsupn_\alpha \half\|m_\alpha - b_\alpha\|^2 \le \limsupn_\alpha f_0(b_\alpha) -  \limn_\alpha q(b_\alpha) \le {f_0}\dbs(\Lt b^*) - \qt(b^*) = 0.
\end{equation*}
It follows that  
%\eqref{PAIRS5}
\begin{equation}\label{PAIRS5}
\hbox{(a)}\|m_\alpha - b_\alpha\| \to 0,\hbox{ hence (b)} \|s_\alpha - x_\alpha\| \to 0\,\hbox{ and (c)} \|s_\alpha^* - x_\alpha^*\| \to 0.
\end{equation}
From \eqref{PAIRS5}(b),  $\wh{s_\alpha} -  \wh{x_\alpha} \to 0$ in $w(E\dbs,E^*)$.   Combining this with \eqref{NET3}(b),\break $\wh{s_\alpha} \to z\dbs$ in $w(E\dbs,E^*)$.   Combining \eqref{PAIRS5}(c) and \eqref{NET3}(a), $\|s_\alpha^* - z^*\| \to 0$.   This completes the proof of \eqref{PAIRS1}.   
\par
Finally, \eqref{PAIRS2} follows by combining \eqref{NET5}(a) and \eqref{PAIRS5}(a).
\end{proof}
%
%\Def{Ddef}
\begin{definition}\label{Ddef}
In Phelps, \cite[Definition 3.1, pp.\ 216-217]{PRAGUE}, $M$ is defined to be of {\em type (D)} if, for every element, $b^* \in M^\sharp$, there exists a bounded net, $m_\alpha$, of elements of $M$ such that $Lm_\alpha \to b^*$ in $\TNW$.
\par
In \cite[Definition 14, p.\ 187]{RANGE}, $M$ is defined to be of {\em type (WD)} if, for every element $(z^*,z\dbs) \in M^\sharp$, there exists a bounded net, $(s_\alpha,s^*_\alpha)$ of elements of $M$ such that $\|s^*_\alpha - z^*\| \to 0$.

\end{definition} 
%\Thm{Dthm}
\begin{theorem}\label{Dthm}
The following three conditions are equivalent:
\par\noindent
{\rm(a)}\enspace $M$ is a \cmqd\ subset of $B$.
\par\noindent
{\rm(b)}\enspace $M$ is of type (D).
\par\noindent
{\rm(c)}\enspace $M$ is of type (WD).
\end{theorem}
\begin{proof}
If (a) is true and $b^* \in M^\sharp$, let $M_0 = \{m \in M\colon\ \|m\|\le \|b^*\| + 1\}$.   Then \Lem{PAIRSlem} gives a net in $M_0$ with the required property.   If (b) is true then it is obvious that (c) is true.
\par
Suppose, finally, that (a) is false.   From \Thm{EQUIVthm}\big((b)$\lr$(a)\big), there\break exists $b^* = (z^*,z\dbs) \in B^*$ such that $G_M(b^*) < 0$, and it follows from \eqref{MG} that $b^* \in M^\sharp$.   If (c) were true, there would exist $K \ge 0$ and a net, $(s_\alpha,s^*_\alpha) \in M$, such that $\|(s_\alpha,s^*_\alpha)\| \le K$ and $\|s^*_\alpha - z^*\| \to 0$.   From \eqref{G}, we wold have 
\begin{align*}
-G_M(b^*) &= \inf\qt(L(M) - b^*) \le \infn_\alpha\qt\big(L(s_\alpha,s^*_\alpha\big) - b^*)\\
&= \infn_\alpha\bra{s_\alpha^* - z^*}{\wh{s_\alpha} - z\dbs} \le \infn_\alpha\|s_\alpha^* - z^*\|\|\wh{s_\alpha} - z\dbs\|\\
&\le \infn_\alpha\|s_\alpha^* - z^*\|(K + \|z\dbs\|) = 0.
\end{align*}
This contradiction of the assumption above that $G_M(b^*) < 0$ completes the proof that (c)$\lr$(a).      
\end{proof}
%\Rem{SURrem}
\begin{remark}\label{SURrem}
In \cite[Theorems 3.5 and 3.6, pp.\ 218--221]{PRAGUE}, Phelps gives a proof of a result originally due to Gossez.   This proof is exceedingly difficult.     In the language of \Cor{NISURcor}, Gossez's result says:  {\em if $S$ is of type (D) then
$R\big((S^\sharp)^{-1} + {\J}^{-1}\big) = E^*$.}  (We have taken $\lambda = 1$ for simplicity).   Comparing this with the second assertion in \eqref{NISUR2}, and taking into account \Thm{Dthm}, it is clear that Gossez's result is subsumed by \Cor{NISURcor}.   
\end{remark}
\begin{remark}
By virtue of  \Thms{EQUIVthm} and \ref{Dthm}, the subset $M$ of $c_0 \times \ell_1$ introduced in \Ex{TAILGex} is maximally monotone of type (D) such that $M^\sharp$, though maximally monotone, is not of type (D).   The first example of this kind was given by Bueno--Svaiter in \cite[Proposition 3.2, p.\ 299]{OBS}.
\end{remark}

\begin{remark}
It is proved in Marques Alves--Svaiter, \cite[Theorem 4.4, pp.\ 1084--1085]{ASD} that {\em $M$ is of type (NI) if, and only if, $M$ is of type (D)}.   That proof is far from trivial.
\end{remark}
%:bibliography

\end{document}